\documentclass[11pt]{amsart}

\usepackage[a4paper,margin=3cm]{geometry}
\usepackage{amsmath,amssymb,amsthm}
\usepackage[T1]{fontenc}
\usepackage{lmodern}
\usepackage{tikz-cd}
\usepackage{mathrsfs}
\usepackage{calligra}
\usepackage[colorlinks=true,linkcolor=blue,citecolor=blue,urlcolor=blue]{hyperref}
\usepackage{xparse}
\usepackage{environ}
\usepackage[babel]{csquotes}
\usepackage{microtype}

\newcommand{\R}{\mathbf{R}}
\newcommand{\Q}{\mathbf{Q}}
\renewcommand{\P}{\mathbf{P}}

\newcommand{\A}{\mathbf{A}}
\newcommand{\K}{\mathbf{K}}
\DeclareMathOperator{\Hom}{\mathscr{H}\text{\kern -3pt {\calligra\large om}}\,}
\DeclareMathOperator{\Spec}{\mathscr{S}\text{\kern -4pt {\calligra\large pec}}\,}

\newcommand*{\DashedArrow}[1][]{\mathbin{\tikz [baseline=-0.25ex,-latex, dashed,#1] \draw [#1] (0pt,0.5ex) -- (1.3em,0.5ex);}}

\renewcommand{\qedsymbol}{\ensuremath{\blacksquare}}

\theoremstyle{plain}
\newtheorem{thm}{Theorem}[section]
\newtheorem{lem}[thm]{Lemma}
\newtheorem{cor}[thm]{Corollary}

\theoremstyle{definition}
\newtheorem{defn}[thm]{Definition}
\newtheorem{ex}[thm]{Example}
\newtheorem{prop}[thm]{Proposition}

\theoremstyle{remark}

\newenvironment{proofA}{\vspace{0.2cm}\paragraph{\bf\textit{Proof  of Theorem \ref{thm:p1z-cox}}}}{\hfill \qedsymbol \medskip}

\newenvironment{proofB}{\vspace{0.2cm}\paragraph{\bf\textit{Proof  of Theorem \ref{thm:birgeom}}}}{\hfill \qedsymbol \medskip}

\makeatletter
\renewenvironment{proof}[1][\proofname]{\par
  \pushQED{\qed}%
  \normalfont \topsep6\p@\@plus6\p@\relax
  \trivlist
  \item[%
    \hskip\labelsep
    \itshape\bfseries 
    #1%
    \@addpunct{.}
  ]\ignorespaces
}{%
  \popQED\endtrivlist\@endpefalse
}
\let\qed\relax 
\DeclareRobustCommand{\qed}{%
  \ifmmode \mathqed
  \else
    \leavevmode\unskip\penalty\@M\hbox{}\nobreak\hspace{.5em minus .1em}
    \hbox{\qedsymbol}%
  \fi
}
\makeatother

\ExplSyntaxOn
\NewEnviron{pmatrixT}
 {
  \marine_transpose:V \BODY
 }
\int_new:N \l_marine_transpose_row_int
\int_new:N \l_marine_transpose_col_int
\seq_new:N \l_marine_transpose_rows_seq
\seq_new:N \l_marine_transpose_arow_seq
\prop_new:N \l_marine_transpose_matrix_prop
\tl_new:N \l_marine_transpose_last_tl
\tl_new:N \l_marine_transpose_body_tl

\cs_new_protected:Nn \marine_transpose:n
 {
  \seq_set_split:Nnn \l_marine_transpose_rows_seq { \\ } { #1 }
  \int_zero:N \l_marine_transpose_row_int
  \prop_clear:N \l_marine_transpose_matrix_prop
  \seq_map_inline:Nn \l_marine_transpose_rows_seq
   {
    \int_incr:N \l_marine_transpose_row_int
    \int_zero:N \l_marine_transpose_col_int
    \seq_set_split:Nnn \l_marine_transpose_arow_seq { & } { ##1 }
    \seq_map_inline:Nn \l_marine_transpose_arow_seq
     {
      \int_incr:N \l_marine_transpose_col_int
      \prop_put:Nxn \l_marine_transpose_matrix_prop
       {
        \int_to_arabic:n { \l_marine_transpose_row_int }
        ,
        \int_to_arabic:n { \l_marine_transpose_col_int }
       }
       { ####1 }
     }
   }
   \tl_clear:N \l_marine_transpose_body_tl
   \int_step_inline:nnnn { 1 } { 1 } { \l_marine_transpose_col_int }
    {
     \int_step_inline:nnnn { 1 } { 1 } { \l_marine_transpose_row_int }
      {
       \tl_put_right:Nx \l_marine_transpose_body_tl
        {
         \prop_item:Nn \l_marine_transpose_matrix_prop { ####1,##1 }
         \int_compare:nF { ####1 = \l_marine_transpose_row_int } { & }
        }
      }
     \tl_put_right:Nn \l_marine_transpose_body_tl { \\ }
    }
   \begin{pmatrix}
   \l_marine_transpose_body_tl
   \end{pmatrix}
 }
\cs_generate_variant:Nn \marine_transpose:n { V }
\cs_generate_variant:Nn \prop_put:Nnn { Nx }
\ExplSyntaxOff

\title{Remarks on hypersurfaces in $\P^1\times Z$}
\author[F.\ A.\ Denisi]{Francesco Antonio Denisi}
\address{Francesco Antonio Denisi, Fachrichtung Mathematik, Campus, Gebäude E2 4, Universität des Saarlandes, 66123 Saarbrücken, Deutschland}
\email{denisi@math.uni-sb.de}

\author[A.\ Laface]{Antonio Laface}
\address{Universidad de Concepción, Concepción, Chile}
\email{alaface@udec.cl}

\date{\today}

\begin{document}

\maketitle

\begin{abstract}
We study the birational geometry of hypersurfaces in projective varieties of the form $\mathbf{P}^1 \times Z$, where $Z$ satisfies mild assumptions. Building on recent results of Herrera--Laface--Ugaglia, we study their Cox rings—when finitely generated—in terms of the Cox ring of $Z$. In particular, we obtain a complete picture when $Z$ is a Fano variety of dimension at least 3,  and with class group of rank 1.
\end{abstract}

\section*{Acknowledgements}
 Denisi is supported by the Deutsche Forschungsgemeinschaft (DFG, German Research Foundation) – Project ID 286237555 (TRR 195).
 Laface is partially supported by Proyecto FONDECYT Regular n.~1230287.
\section{Introduction}

Given a projective variety $X$, defined over an algebraically closed field of characteristic 0, and a hypersurface $Y \subset X$, it is a natural problem to study how the birational geometry of $Y$ compares with that of $X$. For example, assuming that $X$ is a Mori dream space, one may ask for conditions under which $Y$ is itself a Mori dream space.
\medskip

Mori dream spaces were introduced by Hu and Keel in \cite{HK00} and play an important role in birational geometry, as they exhibit ideal behaviour under the minimal model program (MMP). Indeed, it turns out that any MMP for any divisor can be carried out. Also, any nef divisor is semiample by definition. Due to these strict requirements, finding examples of Mori dream spaces is both interesting and challenging.
\medskip

By definition, a Mori dream space $Y$ has a \enquote{finite birational geometry}, and hence determining all possible maps $Y \DashedArrow Z$ arising in the study of the Mori theory of $Y$, together with the relevant associated cones of divisors in the Néron--Severi space (which in this case are all rational polyhedral), is both a natural and challenging problem. Moreover, associated with $Y$ is a finitely generated algebra over the ground field, called the \emph{Cox ring} of the Mori dream space. Whenever one has such an algebraic object at hand, it is naturally of interest to provide an explicit presentation of it.
\medskip

In this note, we address these problems for hypersurfaces in ambient varieties of the form $\mathbf{P}^1 \times Z$, recovering in some specific cases existing results in the literature (see, for example, \cite{Ito14}, \cite{Ottem15}, \cite{Denisi24}, \cite{hlu}, and \cite{PIS25}).

\begin{thm}\label{thm:birgeom}
Let $Z$ be a normal, $\Q$-factorial and Cohen--Macaulay projective variety of dimension $m\geq 3$, and with divisor class group of rank 1. Set $X:=\P^1\times Z$, and let $L=\mathscr{O}_{\mathbf{P}^1}(d)\boxtimes L'$ be an ample and globally generated line bundle, with $1\leq d\leq m$. Let $Y$ be a general member of $|L|$, $H_1$ the class in $N^1(Y)_{\R}$ of the restriction to $Y$ of any divisor in $|p_1^*\mathscr{O}_{\P^1}(1)|$, and $H_2$ the class of the restriction of any divisor in $|p_2^*L'|$ to $Y$. Then we have:
\begin{enumerate}
 \item If $1<d<m$, then
\[
\mathrm{Eff}(Y)=\mathrm{Mov}(Y)=\R_{\geq 0}H_1+\R_{\geq 0}(H_2-H_1),
\] 
\[
\mathrm{Nef}(Y)=\R_{\geq 0}H_1+\R_{\geq 0}H_2.
\]
In this case, $Y$ admits (up to isomorphism) a unique small $\Q$-factorial modification, a fibration onto $\P^1$ and a dominant rational fibration to $\P^{d-1}$.

\item If $d=1$, then
\[
\mathrm{Eff}(Y)=\R_{\geq 0}H_1+\R_{\geq 0}(H_2-H_1),
\]
\[
\mathrm{Nef}(Y)=\mathrm{Mov}(Y)=\R_{\geq 0}H_1+\R_{\geq 0}H_2.
\]
In this case, $Y$ admits a divisorial contraction and a fibration onto $\P^1$.

\item If $d=m$, then
\[
\mathrm{Eff}(Y)=\mathrm{Mov}(Y)=\mathrm{Nef}(Y)=\R_{\geq 0}H_1+\R_{\geq 0}(H_2-H_1).
\] 
In this case, $Y$ has a fibration onto $\P^{d-1}$ and a fibration onto $\P^1$.
 \end{enumerate}

 Moreover, if $\mathrm{Pic}(Z)$ is finitely generated, the variety $Y$ is a Mori dream space.
\end{thm}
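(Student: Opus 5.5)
The plan is to first pin down $N^1(Y)$ and a concrete description of $Y$, then exhibit the extremal divisors by explicit sections, and finally read off the cones from base loci and from a presentation of the Cox ring.

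\textbf{Setting up.} Since $Z$ is Cohen--Macaulay of dimension $m\ge 3$, $X=\P^1\times Z$ has dimension $\ge 4$, and $L$ is ample. I would apply a Grothendieck--Lefschetz type theorem for class groups of general ample hypersurfaces (the results of Herrera--Laface--Ugaglia quoted in the introduction) to get $\mathrm{Cl}(Y)_{\Q}\cong \mathrm{Cl}(X)_{\Q}$. Hence $N^1(Y)_{\R}$ is spanned by $H_1$ and $H_2$.

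Write the equation of $Y$ as
\[
f=\sum_{i=0}^{d} s^i t^{d-i} g_i, \qquad g_i\in H^0(Z,L') \text{ general},
\]
and let $a:=L'^m>0$. Then $H_1^2=0$, $H_1\cdot H_2^{m-1}=a$ and $H_2^m=da$.

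\textbf{The class $H_2-H_1$.} I will show $H_2-H_1$ is effective and compute its sections. Use the Koszul sequence
\[
0\to \mathscr{O}_X(-1-d)\boxtimes\mathscr{O}_Z\to \mathscr{O}_X(-1)\boxtimes L'\to \mathscr{O}_Y(H_2-H_1)\to 0 .
\]
By Künneth, $H^0$ and $H^1$ of $\mathscr{O}(-1)\boxtimes L'$ vanish, while
\[
H^1(\mathscr{O}_{X}(-1-d)\boxtimes\mathscr{O}_Z)=H^1(\P^1,\mathscr{O}(-1-d))=\K^d .
\]
So $h^0(Y,H_2-H_1)=d$. Concretely, the sections are the \enquote{cofactors} $\sigma_j$ obtained by dividing the relation $f=0$ by $s$ or $t$; for instance, $\{g_d=0\}\cap Y=\{t=g_d=0\}\cup D$ with $D\in|H_2-H_1|$.

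The base locus of $\langle\sigma_1,\dots,\sigma_d\rangle$ should be $B=\P^1\times\{g_0=\dots=g_d=0\}\subset Y$. By generality $B$ has dimension $1+m-(d+1)=m-d$, i.e. codimension $d$ in $Y$. This gives the trichotomy:
\begin{itemize}
\item $d=1$: $B$ is a divisor $E$. Here $Y\to Z$ is birational and is the blow-up of $Z$ along $\{g_0=g_1=0\}$, and $E$ is the exceptional divisor of class $H_2-H_1$. So $H_2-H_1$ is rigid and not movable, while $H_2$ is nef. This yields the divisorial contraction $Y\to Z$ and the fibration $Y\to\P^1$.
\item $1<d<m$: $B$ has codimension $\ge2$, so $H_2-H_1$ is movable but not nef. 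One can check this by a curve in $B$, e.g. a $\P^1$-fibre over a point of $\{g_i=0\}$, which has intersection $-1$ with $H_2-H_1$. The $\sigma_j$ give the dominant rational map $Y\DashedArrow\P^{d-1}$.
\item $d=m$: $B=\emptyset$, since $m+1$ general sections of $L'$ have no common zero on $Z$. So $H_2-H_1$ is globally generated and gives the morphism $Y\to\P^{d-1}$.
\end{itemize}

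\textbf{Extremality and Cox ring.} $H_1$ is extremal in $\mathrm{Eff}(Y)$ because it defines the fibration $Y\to\P^1$. The main obstacle is showing that $H_2-H_1$ is not big, i.e. that no class $H_2-(1+\epsilon)H_1$ is effective.

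I would prove this through the Cox ring. Using the Herrera--Laface--Ugaglia criterion, I would show that $\mathrm{Cox}(Y)$ is generated by $s,t$, the generators of $\mathrm{Cox}(Z)$ and $\sigma_1,\dots,\sigma_d$. The relations should be the determinantal ones: the $2\times2$ minors of a $2\times (d+1)$ matrix built from $(s,t)$, the $\sigma_j$ and the $g_i$. The key check is that this candidate ring is a normal complete intersection/determinantal domain of the right dimension, and that its irrelevant ideal has codimension $\ge2$. When $\mathrm{Pic}(Z)$ is finitely generated, $\mathrm{Cox}(Z)$ is finitely generated, so this also yields finite generation of $\mathrm{Cox}(Y)$ and hence the Mori dream space claim. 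Since every generator has degree $H_1$, $H_2-H_1$ or a multiple of $H_2$, this gives $\mathrm{Eff}(Y)=\langle H_1,H_2-H_1\rangle$.

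As a fallback for non-bigness when $d=m$: the fibres of $Y\to\P^{m-1}$ are curves, and $(H_2-H_1)^m=(d-m)a=0$. Both show that $H_2-H_1$ is nef and not big.

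\textbf{Reading off the cones.} $\mathrm{Mov}(Y)$ and $\mathrm{Nef}(Y)$ then follow from the base-locus analysis above. For $1<d<m$, flipping the locus $B$ produces the unique other chamber, i.e. the unique small $\Q$-factorial modification, on which $H_2-H_1$ becomes semiample.
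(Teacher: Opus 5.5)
Your computations of $h^0(Y,H_2-H_1)=d$ and of the base locus $B=\P^1\times\{g_0=\dots=g_d=0\}$ are correct, and so is the curve $\P^1\times\{z\}\subset B$ with $(H_2-H_1)\cdot C=-1$, which pins down $\mathrm{Nef}(Y)$ for $d<m$. Your treatment of the cases $d=1$ and $d=m$ is also fine.

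The gap is the case $1<d<m$. Three things are deferred to a Cox ring computation that is neither carried out nor available under the hypotheses of the theorem: the non-bigness of $H_2-H_1$ (which is what gives $\mathrm{Eff}(Y)$ and $\mathrm{Mov}(Y)=\mathrm{Eff}(Y)$), the existence and uniqueness of the small $\Q$-factorial modification, and the Mori dream space claim.

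\emph{Your guessed presentation has the wrong dimension.} By Eagon--Northcott, the ideal of $2\times 2$ minors of any $2\times(d+1)$ matrix has height at most $d$. So your quotient of $\mathcal R(Z)[s,t,\sigma_1,\dots,\sigma_d]$ has dimension at least $m+3$, whereas $\dim\mathcal R(Y)=\dim Y+\operatorname{rk}\operatorname{Cl}(Y)=m+2$. The actual relations are the $d+1$ bilinear ones of Theorem~\ref{thm:p1z-cox}.

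\emph{The Herrera--Laface--Ugaglia route needs inputs you do not have.} It requires $V(f)\setminus V(T_1,T_2)$ to be normal and depth at least $2$ along $\langle T_1,T_2\rangle$. The paper secures these only by assuming that $\mathcal R(Z)$ is Cohen--Macaulay and that the coefficients form a regular sequence in it. Cohen--Macaulayness of $Z$ does not imply that of $\mathcal R(Z)$, which also needs $H^i(Z,\mathscr O_Z(D))=0$ for $0<i<m$ and all $D$. Moreover, the cone statements do not even assume $\operatorname{Pic}(Z)$ finitely generated.

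\emph{\enquote{Flipping $B$} is not an argument.} There is no MMP framework here, and the existence of the second chamber is exactly what must be proved. A minor point: the Lefschetz theorem for class groups you need is Ravindra--Srinivas; Herrera--Laface--Ugaglia take $\operatorname{Cl}(X)\cong\operatorname{Cl}(Y)$ as an input.

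The missing idea is the explicit small modification of Proposition~\ref{prop:birgeom}. Your sections $\sigma_1,\dots,\sigma_d$ together with $p_2$ define a map $Y\dashrightarrow\P^{d-1}\times Z$, regular off $B$. The paper identifies the closure of its image with the degeneracy locus $Y'=\{\operatorname{rk}N\le 2\}$ of the $(d+1)\times 3$ matrix $N$.

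This is exactly where the Cohen--Macaulay hypothesis on $Z$ enters; your proposal never uses it. Via Lemma~\ref{lem:CMvar} and the codimension bound for degeneracy loci, $Y'$ is an irreducible, reduced, Cohen--Macaulay variety of the expected codimension $d-1$. The inverse map $Y'\dashrightarrow Y$ is regular off $\P^{d-1}\times\{g_0=\dots=g_d=0\}$, which has codimension $2$ in $Y'$.

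Under this map $H_2-H_1$ corresponds to $H_1'$, which is pulled back from $\P^{d-1}$ with $d-1<m$. So it is semiample on $Y'$ and not big. This gives $\mathrm{Eff}(Y)$, $\mathrm{Mov}(Y)=\mathrm{Eff}(Y)$, the second chamber $\R_{\geq0}H_2+\R_{\geq0}(H_2-H_1)$ as the pullback of $\mathrm{Nef}(Y')$, and the Mori dream space property in one stroke.

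If you only wanted non-bigness, there is a cheaper fix inside your own set-up. Let $\phi=(\sigma_1:\dots:\sigma_d)$. Each fibre $\phi^{-1}(q)$ is identified via $p_2$ with $Z_q\setminus\{g_0=\dots=g_d=0\}$, where $Z_q\subset Z$ is cut out by $d-1$ linear combinations of the $g_i$. Every component of $Z_q$ has dimension at least $m-d+1\geq 2$ and meets $\{g_0=\dots=g_d=0\}$ in codimension at least $2$. So through every point of $\phi^{-1}(q)$ there are complete curves avoiding $B$ with $(H_2-H_1)\cdot C=0$, and a big class cannot vanish on a covering family. This alone, however, does not produce the small modification.
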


Motivated by the fact that the birational geometry of the hypersurfaces considered in Theorem \ref{thm:birgeom} does not depend on the choice of the factor $Z$, and that the Cox ring of a projective variety encodes information about the birational geometry of the variety, one may hope that the Cox rings of such hypersurfaces can likewise be described in terms of the Cox ring of $Z$. Very recent results of Herrera--Laface--Ugaglia (see \cite{hlu}) provide the appropriate strategy to address this problem and yield under mild assumptions a positive answer to this expectation.

\begin{thm}\label{thm:p1z-cox}
Let $Z$ be a normal projective variety of dimension $m\geq 3$ with finitely generated divisor class group of rank one, and let $R:=\mathcal R(Z)$ be its Cox ring.
Set $X:=\P^1\times Z$ and let
$Y\subseteq X$ be a normal hypersurface defined by the
equation 
\[
f=
f_0T_1^d-f_1T_1^{d-1}T_2+\cdots+(-1)^df_dT_2^d,
\tag{1}
\] 
where $f_0,\dots,f_d\in R$ are homogeneous elements of
the same degree. Suppose that:
\begin{enumerate}
\item The divisor $Y$ in $X$ is Cartier;
\item $V(f)\setminus V(T_1,T_2)\subseteq \mathbf A^2\times\overline Z$ is
normal;
\item the pullback
$\iota^*\colon \operatorname{Cl}(\mathbf P^1\times Z)\to \operatorname{Cl}(Y)$
is an isomorphism;
\item $f_0,\dots,f_d$ is a regular sequence in $R$.
\end{enumerate}
Then the Cox ring of $Y$ is
\[
\mathcal R(Y)\cong
\frac{
R[T_1,T_2,S_1,\dots,S_d]
}{
\left\langle
f_0+T_2S_1,\,
f_1+T_1S_1+T_2S_2,\,
\dots,\,
f_{d-1}+T_1S_{d-1}+T_2S_d,\,
f_d+T_1S_d
\right\rangle
}.
\]
The grading on the right-hand side is induced by
$\operatorname{Cl}(Y)$ via the isomorphism $\iota^*$; in particular, one has
$\deg S_i=\deg f_i-\deg T_1$ for every $i$.
\end{thm}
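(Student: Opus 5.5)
We follow the Herrera--Laface--Ugaglia strategy from \cite{hlu}: identify the Cox ring of $Y$ with the ring of global sections of the pullback to $Y$ of all divisor classes, and realize it as the quotient of a "Koszul-type" extension of $\mathcal R(X)=R[T_1,T_2]$. By hypothesis (3), $\operatorname{Cl}(Y)\cong\operatorname{Cl}(X)$, so $\mathcal R(Y)=\bigoplus_{D\in\operatorname{Cl}(X)}H^0(Y,\iota^*D)$. The total coordinate space of $X$ is $\A^2\times\overline Z$ with $\overline Z=\Spec R$. The characteristic space of $Y$ is the preimage of $Y$ minus the irrelevant locus; this is an open subset of $V(f)\setminus V(T_1,T_2)$. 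By hypotheses (1) and (2) this open subset is normal. Hence $\mathcal R(Y)$ equals the ring of regular functions on $\widehat Y=V(f)\setminus V(T_1,T_2)$, provided the complement of the relevant open subset in $\widehat Y$ has codimension $\ge2$. We verify this using the codimension bounds coming from $m\ge3$. So the main task is to compute $\mathcal O(V(f)\setminus V(T_1,T_2))$.

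The key step is the following. On the chart $T_1\neq0$, set $t=T_2/T_1$. Since $f$ is a binary form in $T_1,T_2$ with coefficients $f_i$, we can divide it by $T_2$ with remainder along the chart. This produces the functions $S_i$: explicitly,
\[
S_1=-f_0/T_2,\qquad S_{i+1}=-(f_i+T_1S_i)/T_2,
\]
which are regular on $T_2\neq0$. Symmetrically, $S_d=-f_d/T_1$ and $S_{i}=-(f_i+T_2S_{i+1})/T_1$ on $T_1\neq0$. Using $f=0$, the two recursions agree on the overlap. Thus the $S_i$ are regular functions on $\widehat Y$ satisfying the displayed relations, of degree $\deg f_i-\deg T_1$. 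This gives a graded homomorphism $\varphi$ from the displayed quotient ring $A$ to $\mathcal O(\widehat Y)$.

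To show $\varphi$ is an isomorphism, we prove three things about $A$: it is an integral normal domain, $\dim A=\dim\widehat Y=m+2$, and $T_1,T_2$ are not zero divisors with $V(T_1,T_2)\subset\Spec A$ of codimension $\ge2$. Then $\Spec A\setminus V(T_1,T_2)\cong\widehat Y$, because on $T_i\neq0$ the $S$'s are eliminated and we recover $V(f)$ localized. Normality then gives $A=\mathcal O(\widehat Y)$ by Hartogs.

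The main obstacle is controlling $A\cap V(T_1,T_2)$. There $A/(T_1,T_2)\cong R[S]/(f_0,\dots,f_d)$. Since $f_0,\dots,f_d$ is a regular sequence, hypothesis (4), this has dimension $(m+1-(d+1))+d=m$, i.e.\ codimension $2$ in $\Spec A$. Also, $A$ is presented with $d+1$ equations in a ring of dimension $m+1+2+d$, so it is a complete intersection of dimension $m+2$. Hence $A$ is Cohen--Macaulay (assuming $R$ Cohen--Macaulay, as standard via \cite{hlu}; otherwise argue depth directly using the regular sequence). The relations then show $T_1,T_2$ form a regular sequence in $A$. So $A$ satisfies $S_2$, and it is $R_1$ because it is normal off a codimension-$2$ set. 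Serre's criterion gives normality, and integrality follows since $\widehat Y$ is irreducible and dense. Grading compatibility is immediate from the construction.
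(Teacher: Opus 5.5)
Your reduction matches the paper's. Via \cite{hlu}, the Cox ring of $Y$ is the ring of functions on $V(f)\setminus V(T_1,T_2)$, that is, $A_{T_1}\cap A_{T_2}\subseteq A_{T_1T_2}$ with $A=R[T_1,T_2]/\langle f\rangle$. The recursive formulas for the $S_i$ then give the comparison map; the paper also simply cites \cite[Thm.~1]{hlu} at this point.

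The gap is in the final step. Write $B=R[T_1,T_2,S_1,\dots,S_d]$, $I=\langle g_0,\dots,g_d\rangle$ for the displayed ideal, and $C=B/I$ (your $A$). You deduce that $C$ is Cohen--Macaulay from $R$ being Cohen--Macaulay. That is not a hypothesis of the theorem, nor a standing assumption of \cite{hlu}. The paper adds it only in Corollary~\ref{cor:general-p1z-cox}, to secure hypothesis (2) for general $Y$, and stresses that the theorem does without it. Once it is dropped, several claims lose their support:
\begin{itemize}
\item Counting $d+1$ equations in a ring of dimension $m+d+3$ only gives $\dim C\geq m+2$; it does not make $C$ a complete intersection.
\item ``The relations then show $T_1,T_2$ form a regular sequence'' has no argument behind it.
\item Without that, you do not have depth $\geq 2$ along $V(T_1,T_2)$, which is what both Serre's criterion and Hartogs need.
\end{itemize}
Your parenthetical ``otherwise argue depth directly using the regular sequence'' names exactly the missing step.

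The missing argument is short; it is the paper's Proposition~\ref{prop:intersection}.
\begin{enumerate}
\item We have $B/\langle T_1,T_2\rangle\cong R[S_1,\dots,S_d]$, and there $g_i\mapsto f_i$. By hypothesis (4) and flatness of polynomial extensions, $T_1,T_2,g_0,\dots,g_d$ is a regular sequence in $B$.
\item Take a prime $\mathfrak p\supseteq I+\langle T_1,T_2\rangle$. All these elements lie in $\mathfrak pB_{\mathfrak p}$, so they can be permuted in the local ring $B_{\mathfrak p}$.
\item Hence $g_0,\dots,g_d,T_1,T_2$ is $B_{\mathfrak p}$-regular, and so $T_1,T_2$ is $C_{\mathfrak p}$-regular.
\item Therefore $\operatorname{grade}(\langle T_1,T_2\rangle C,C)\geq 2$, and $H^0$ and $H^1$ of local cohomology along $\langle T_1,T_2\rangle$ vanish.
\item The \v{C}ech sequence then gives $C=C_{T_1}\cap C_{T_2}\cong A_{T_1}\cap A_{T_2}$ directly.
\end{enumerate}
This needs no Cohen--Macaulayness, dimension count, normality or integrality of $C$. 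If you prefer to keep your Hartogs route, it can be repaired the same way. At primes off $V(T_1,T_2)$, $R_1$ and $S_2$ come from hypothesis (2). At primes containing $\langle T_1,T_2\rangle$, the depth bound forces height $\geq 2$ and gives $S_2$. Serre's criterion then shows $C$ is normal.
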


We observe that Theorem \ref{thm:p1z-cox} is stronger than Theorem \ref{thm:birgeom}, provided that item (2) in the statement is satisfied. Indeed, whereas Theorem \ref{thm:birgeom} requires the ambient variety to be Cohen--Macaulay and the hypersurface to be sufficiently general to determine the birational geometry explicitly, Theorem \ref{thm:p1z-cox} allows us to dispense with the Cohen--Macaulay assumption on the ambient variety and imposes significantly weaker hypotheses on the hypersurface. The corollary below provides a sufficient condition such that all the assumptions in Theorem \ref{thm:p1z-cox} are satisfied.

\begin{cor}\label{cor:general-p1z-cox}
Let $Z$ be a normal projective variety of dimension $m\geq 3$ with finitely generated divisor class group of rank one, and let $R:=\mathcal R(Z)$ be its Cox ring.
Assume that $R$ is Cohen--Macaulay. 
Set $X:=\mathbf P^1\times Z$.
Let $L$ be an ample and base-point free line bundle on $X$, and $Y\in |L|$ a general normal hypersurface. Write the Cox
equation of $Y$ as
\[
f=
f_0T_1^d-f_1T_1^{d-1}T_2+\cdots+(-1)^df_dT_2^d,
\]
where $f_0,\dots,f_d\in R$ are homogeneous elements. Assume that
$f_0,\dots,f_d$ is a regular sequence in $R$. Then
\[
\mathcal R(Y)\cong
\frac{
R[T_1,T_2,S_1,\dots,S_d]
}{
\left\langle
f_0+T_2S_1,\,
f_1+T_1S_1+T_2S_2,\,
\dots,\,
f_{d-1}+T_1S_{d-1}+T_2S_d,\,
f_d+T_1S_d
\right\rangle }.
\]
The grading on the right-hand side is induced by
$\operatorname{Cl}(Y)$ via the restriction isomorphism
$\operatorname{Cl}(X)\simeq\operatorname{Cl}(Y)$.
\end{cor}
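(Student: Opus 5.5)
The corollary will follow from Theorem \ref{thm:p1z-cox} once hypotheses (1)--(3) are verified. Hypothesis (4) is assumed. So the plan is to check, for a general normal $Y\in|L|$ with $L$ ample and base-point free, that:
\begin{itemize}
\item[(1)] $Y$ is Cartier;
\item[(2)] $V(f)\setminus V(T_1,T_2)$ is normal;
\item[(3)] $\iota^*\colon\operatorname{Cl}(X)\to\operatorname{Cl}(Y)$ is an isomorphism.
\end{itemize}

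Hypothesis (1) is immediate, since $Y$ is the zero locus of a section of the line bundle $L$.

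For (2), I would use that $\widehat X:=\A^2\setminus\{0\}\times \widehat Z\to X$ is a good quotient by a torus $H$. Here $\widehat Z\subseteq\overline Z$ is the characteristic space. On the relevant open set this quotient is a principal bundle over the regular locus, and $L$ is Cartier there.

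Let $\widehat Y$ be the preimage of $Y$. Since $Y$ is a general member of a base-point free system, Bertini makes $Y$ smooth away from $\operatorname{Sing}(X)$, and $\widehat Y$ is correspondingly smooth over that locus. Normality of $V(f)\setminus V(T_1,T_2)$ should then follow from Serre's criterion:
\begin{itemize}
\item $R_1$: the singular locus of $\widehat Y$ lies over $\operatorname{Sing}(X)\cap Y$, which has codimension at least $2$ in $Y$, because $\operatorname{Sing}(Z)$ has codimension $\geq 2$ and a general $Y$ does not contain any of its components. One also has to handle points of $\overline Z\setminus\widehat Z$, whose complement has codimension $\geq 2$.
\item $S_2$: this comes from the Cohen--Macaulay hypothesis. $R$ Cohen--Macaulay implies $R[T_1,T_2]$ is Cohen--Macaulay, so the hypersurface $V(f)$ is Cohen--Macaulay, hence $S_2$. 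Removing $V(T_1,T_2)$ preserves this.
\end{itemize}
This is also exactly where the Cohen--Macaulay assumption is used.

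For (3), I would invoke a Grothendieck--Lefschetz type statement for class groups of ample divisors: Ravindra--Srinivas for normal varieties, with $\dim Y=m\geq 3$. It gives that restriction $\operatorname{Cl}(X)\to\operatorname{Cl}(Y)$ is an isomorphism for a very general (and, with the finitely generated class group, general) member of a base-point free ample linear system.

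I expect (3) to be the main obstacle. Ravindra--Srinivas is stated for very general members and in the normal setting, so one needs the count $\dim X=m+1\geq 4$ and must pass from very general to general. If the general case is not directly available, I would instead argue through the Cox ring: $\operatorname{Cl}(Y)$ is computed from the $H$-factoriality of $R[T_1,T_2]/\langle f\rangle$ localized away from $V(T_1,T_2)$. That factoriality would be deduced from Grothendieck's theorem on parafactoriality, namely that complete intersections of dimension $\geq 4$ that are regular in codimension $\leq 3$ are factorial, applied in the graded setting. Once (1)--(4) hold, Theorem \ref{thm:p1z-cox} yields the displayed presentation with the grading transported by $\iota^*$.
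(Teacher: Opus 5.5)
Your proposal is correct and follows the paper's proof: Cartier because $L$ is invertible; normality of $\widetilde Y_1$ via Serre's criterion, with $S_2$ coming from $R$ being Cohen--Macaulay and $R_1$ from a codimension count; and hypothesis (3) from Ravindra--Srinivas. That theorem already gives the isomorphism for a \emph{general} member once $\dim X\geq 4$, so your very-general worry and your parafactoriality fallback are unnecessary (the fallback would in any case need $A$ to be a complete intersection, which $R$ Cohen--Macaulay does not give). The only real difference is in the $R_1$ step: the paper cites \cite[Prop.~1.2]{hlu} for the normality of $\widehat Y$, whereas you derive it directly from Bertini on $X_{\mathrm{reg}}$ and the fact that $p$ is an $H$-principal bundle there, together with the $2$-dimensional fibres of $p$ and the codimension of $(\A^2\setminus\{0\})\times\{0\}$ in $\widetilde Y_1$, two facts you should state explicitly.
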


It turns out that the hypotheses of Corollary \ref{cor:general-p1z-cox} are actually satisfied by all Fano varieties over any algebraically closed field of characteristic 0 with a class group of rank 1, thus giving an infinite series of new examples for which Theorem \ref{thm:p1z-cox} holds.

\begin{cor}\label{cor:Fano}
Let $Z$ be a Fano variety of dimension $m\geq 3$ with divisor class group of rank one, and $R:=\mathcal R(Z)$ its Cox ring. Let $L$ be an ample and base-point free line bundle on $X:=\P^1\times Z$, and $Y\in |L|$ a general hypersurface. Write the Cox
equation of $Y$ as
\[
f=
f_0T_1^d-f_1T_1^{d-1}T_2+\cdots+(-1)^df_dT_2^d,
\]
where $f_0,\dots,f_d\in R$ are homogeneous elements, with 
$d\leq m$.
Then
\[
\mathcal R(Y)\cong
\frac{
R[T_1,T_2,S_1,\dots,S_d]
}{
\left\langle
f_0+T_2S_1,\,
f_1+T_1S_1+T_2S_2,\,
\dots,\,
f_{d-1}+T_1S_{d-1}+T_2S_d,\,
f_d+T_1S_d
\right\rangle }.
\]
\end{cor}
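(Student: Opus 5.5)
The strategy is to deduce Corollary \ref{cor:Fano} from Corollary \ref{cor:general-p1z-cox}. To do so we check its hypotheses one at a time: $Z$ is normal and projective of dimension $m\geq 3$, $\mathrm{Cl}(Z)$ is finitely generated of rank one, $R$ is Cohen--Macaulay, a general $Y\in|L|$ is normal, and $f_0,\dots,f_d$ form a regular sequence in $R$. Throughout we use the standing convention that a Fano variety is normal, projective, with at worst klt singularities and $-K_Z$ ample ($\Q$-Cartier).

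\textbf{Finite generation and Cohen--Macaulayness of $R$.} By Birkar--Cascini--Hacon--McKernan, a klt Fano variety is a Mori dream space. Hence $\mathrm{Cl}(Z)$ is finitely generated and $R=\mathcal R(Z)$ is a finitely generated algebra. By the Gongyo--Okawa--Sannai--Takagi characterization, the Cox ring of a variety of Fano type has log terminal singularities; in particular $\operatorname{Spec} R$ is rational, and hence Cohen--Macaulay (characteristic $0$). In addition $R$ is normal, with $\dim R=m+1$, and it is positively graded by the ample generator class, up to torsion.

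\textbf{Normality of $Y$.} $X$ is normal and $L$ is base-point free, so Bertini's theorem makes a general $Y\in|L|$ normal. Indeed, $Y$ is smooth away from the singular locus of $X$. At singular points of $X$ it is a general Cartier divisor in the klt (hence Cohen--Macaulay) variety $X$, so it is $S_2$; and since $\operatorname{codim}\operatorname{Sing}(X)\ge 2$, a general $Y$ meets it in codimension $\ge2$ in $Y$, so $Y$ is $R_1$.

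\textbf{Regular sequence; main obstacle.} Write $L=\mathscr O(d)\boxtimes L'$ with $L'$ ample and base-point free on $Z$. Then $H^0(X,L)=\bigoplus_{i} T_1^{d-i}T_2^i\,R_{[L']}$, so for general $Y$ the coefficients $f_0,\dots,f_d$ are $d+1\le m+1$ general elements of $R_{[L']}$. It suffices to show that general sections $f_0,\dots,f_d$ of the base-point free $L'$ form a regular sequence in the Cohen--Macaulay ring $R$. Since $R$ is graded Cohen--Macaulay, this reduces to the statement that each successive zero locus $V(f_0,\dots,f_i)\subset\overline Z=\operatorname{Spec}R$ has codimension exactly $i+1$; the relevant primes are homogeneous and associated primes have no embedded components. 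For points of $\overline Z$ outside the irrelevant locus, this follows from base-point freeness, by induction on $i$: a general section avoids every generic point of the previous intersection, and after pulling back along the good quotient $\hat Z\to Z$ we get $\operatorname{codim}\ge i+1$ away from the irrelevant ideal. The delicate point is the irrelevant locus, which is just the vertex because the class group has rank one. The vertex has codimension $m+1\ge d+1$. Moreover, each component of $V(f_0,\dots,f_i)$ is a cone of dimension $\ge m+1-(i+1)\ge 1$, so it is not contained in the vertex unless $i=m$. In the extreme case $d=m$, the intersection $V(f_0,\dots,f_m)$ is set-theoretically the vertex, which is still of the right codimension $m+1$. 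Hence the height condition holds for all $i\le d\le m$, and the Cohen--Macaulay property turns height into regularity. This careful handling of the case $d=m$ and of torsion in $\mathrm{Cl}(Z)$ (working with $\Q$-Cartier multiples of the ample generator) is the main point to check.

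With all hypotheses verified, Corollary \ref{cor:general-p1z-cox} yields the stated presentation of $\mathcal R(Y)$.
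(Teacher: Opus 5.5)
Your proposal is correct and follows the same route as the paper. The paper likewise gets Cohen--Macaulayness of $R$ from the log terminality of Cox rings of Fano varieties (citing Brown, Gongyo--Okawa--Sannai--Takagi, and Remark 2.5 of Kawamata--Okawa, the last covering torsion in $\operatorname{Cl}(Z)$) and then applies Corollary~\ref{cor:general-p1z-cox}. Your explicit checks that a general $Y$ is normal and that general $f_0,\dots,f_d\in R_{[L']}$ with $d\le m$ form a regular sequence (via the height count on $\overline Z$, with the vertex handling $d=m$) fill in hypotheses that the paper's one-line proof leaves implicit.
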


It is worth observing that in the three statements above, although the divisor class group has rank one (i.e., its free part is isomorphic to $\mathbf Z$), it may still have nontrivial torsion. Moreover, all the varieties involved are $\Q$-factorial.

\section{Preliminaries}

Throughout, we work over an algebraically closed field $\mathbf K$ of
characteristic zero. A variety will be an integral separated $\mathbf{K}$-scheme of finite type.
\medskip

Given any projective variety $X$, the Néron--Severi space of $X$ is $N^1(X)_{\R}=\mathrm{Div}(X)_{\R}/{\equiv}$, where $\mathrm{Div}(X)_{\R}$ is the group of Cartier divisors with real coefficients, and $\equiv$ is the numerical equivalence relation, that is, $D_1\equiv D_2$ if and only if $D_1\cdot C=D_2\cdot C$, for every irreducible curve $C$.
\medskip

The movable cone $\mathrm{Mov}(X)$ of $X$ is the cone in the Néron--Severi space spanned by movable Cartier divisor classes, that is, classes of integral Cartier divisors whose stable base loci have codimension at least $2$. The effective cone $\mathrm{Eff}(X)$ is the cone spanned by effective Cartier divisor classes, while the nef cone $\mathrm{Nef}(X)$ is the cone spanned by nef divisor classes, namely divisors $D$ satisfying $D\cdot C \geq 0$ for every irreducible curve $C$.

\subsection{Reminders on vector bundles and degeneracy loci}
In this subsection, we recall some basic notions and results concerning vector bundles on projective varieties.

\begin{defn}
Let $X$ be a projective variety and $E$ a vector bundle (i.e.\ a locally free $\mathscr{O}_X$-module). We define $V(E):=\Spec(\mathrm{Sym}(E))$, i.e.\ $V(E)\to X$ is the geometric vector bundle associated with $E$. 
\end{defn}

\begin{defn}
Let $X$ be a projective variety, and $\varphi \colon E \to F$ a morphism of vector bundles on $X$, with $\mathrm{rk}(E)\geq \mathrm{rk}(F)$. We know that $\varphi \in H^0(X,E^{\vee}\otimes F)$. Let $k$ be a positive integer, and consider 
\[
\wedge^{k+1} \varphi\in H^0(X,\wedge^{k+1}{E^{\vee}}\otimes \wedge^{k+1}{F}).
\]
The section $\wedge^{k+1} \varphi$ defines a morphism of vector bundles 
\[
h \colon \mathscr{O}_X \to \wedge^{k+1}{E^{\vee}}\otimes \wedge^{k+1}{F}.
\]
 If we dualise $h$, we obtain 
 \[
 h^{\vee} \colon (\wedge^{k+1}{E^{\vee}}\otimes \wedge^{k+1}{F})^{\vee} \to \mathscr{O}_X.
 \]
The $k$-th degeneracy locus of $\varphi$ is the closed subscheme $D_k(\varphi)$ defined by the quasi-coherent sheaf of ideals $\mathscr{I}=\mathrm{im}(h^{\vee})$. Set-theoretically, $D_k(\varphi)$ consists of the (closed) points $x$ of $X$ where the induced morphism $\varphi_x \colon E_x/\mathfrak{m}_x E_x \to F_x/\mathfrak{m}_x F_x$ between the fibres has rank smaller than or equal to $k$.
\end{defn}

Recall that a Noetherian scheme $X$ is Cohen--Macaulay if for any closed point $x$ the ring $\mathscr{O}_{X,x}$ is Cohen--Macaulay, i.e.\ $\mathrm{depth}(\mathscr{O}_{X,x})=\mathrm{dim}(\mathscr{O}_{X,x})$. If $M^{m\times n}(\K)\cong \A^{mn}$ is the affine space of matrices of size $m\times n$ over $\K$, we denote by $M_k$ the affine closed subscheme of $\A^{mn}$ parametrising the matrices $M$ of rank $\mathrm{rk
}(M)\leq k$. The scheme $M_k$ is a Cohen--Macaulay affine variety (see for example \cite[ (3.1) Theorem]{ACGH13}).
\medskip

We recall the following useful lemma, for which a proof can be found in \cite[Lemma 3.7]{Denisi24}.

\begin{lem}\label{lem:CMvar}
Let $X$ be a Cohen--Macaulay variety, $E$, $F$, two vector bundles, and suppose that the vector bundle $\Hom(E,F)$ is globally generated. Let $s_0\colon X\hookrightarrow V(\Hom(\wedge^kE,\wedge^kF))$ be the zero section of  the vector bundle $V(\Hom(\wedge^kE,\wedge^kF))\to X$, and $k$ any positive integer. Furthermore, let $\Sigma$ be the inverse image scheme of $s_0(X)$ via the natural morphism $V(\Hom(E,F))\to V(\Hom(\wedge^kE,\wedge^kF))$, and denote by $V$ the inverse image scheme of $\Sigma$ via the natural morphism of vector bundles $X\times H^0(E^{\vee}\otimes F) \to V(\Hom(E,F))$. Then $V$ and $\Sigma$ are Cohen--Macaulay varieties.
\end{lem}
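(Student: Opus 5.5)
The lemma is local on $X$, so I reduce to an affine open $U=\operatorname{Spec} A$ where $E$ and $F$ are trivial, of ranks $e$ and $f$. There $V(\Hom(E,F))|_U\cong U\times M^{f\times e}(\K)$ and $V(\Hom(\wedge^kE,\wedge^kF))|_U\cong U\times M^{\binom fk\times\binom ek}(\K)$. The natural morphism between them is $(x,M)\mapsto (x,\wedge^k M)$, and the zero section is $U\times\{0\}$. Its scheme-theoretic preimage $\Sigma|_U$ is cut out by the $k\times k$ minors of the matrix of coordinates. Hence
\[
\Sigma|_U\cong U\times M_{k-1}.
\]
Since $U$ is Cohen--Macaulay and $M_{k-1}$ is a Cohen--Macaulay affine variety by \cite[(3.1) Theorem]{ACGH13}, the product over the algebraically closed field $\K$ is Cohen--Macaulay. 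It is also integral, since both factors are varieties and $\K$ is algebraically closed. This gives the claim for $\Sigma$; irreducibility globally follows because $\Sigma\to X$ is locally a product with irreducible fibre over an irreducible base.

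For $V$, set $W:=H^0(E^\vee\otimes F)$ and let $\pi\colon X\times W\to V(\Hom(E,F))$ be the evaluation morphism $(x,s)\mapsto s(x)$. Global generation of $\Hom(E,F)=E^\vee\otimes F$ means that the evaluation map $W\otimes\mathscr O_X\to E^\vee\otimes F$ is surjective. Its kernel $K$ is therefore a vector bundle, and over $U$ (after shrinking so that $K$ is also trivial) we get a splitting $W\otimes\mathscr O_U\cong (E^\vee\otimes F)|_U\oplus K|_U$. This makes $\pi$ over $U$ isomorphic to the projection
\[
U\times M^{f\times e}(\K)\times \A^{r}\to U\times M^{f\times e}(\K),\qquad r=\operatorname{rk}K.
\]
In other words, $\pi$ is a smooth surjective morphism, a trivial affine bundle locally. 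Then $V=\pi^{-1}(\Sigma)$ is locally $\Sigma|_U\times\A^r$, which is again Cohen--Macaulay and integral.

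The main obstacle is making the splitting canonical enough to identify scheme structures. The point is that the inverse image of $\Sigma$ is defined by pulling back the ideal of minors, and under a linear change of fibre coordinates over $U$ this ideal becomes the ideal of minors in the first block of coordinates. Once this is checked, flatness of $\pi$ together with the Cohen--Macaulay property of $\Sigma$ and of the smooth fibres gives the Cohen--Macaulay property of $V$. I would also recall the standard fact that a flat local homomorphism with Cohen--Macaulay base and fibres has Cohen--Macaulay source, since it gives an alternative argument without trivialising.
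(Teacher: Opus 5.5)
The paper contains no proof of this lemma; it only cites \cite[Lemma 3.7]{Denisi24}. So there is no internal argument to compare with. Your proposal is a correct, self-contained proof along the natural lines. First, a local trivialisation identifies $\Sigma|_U$ with $U\times M_{k-1}$, with the right index $k-1$, since $\Sigma$ is cut out by $\wedge^k$. Then the evaluation morphism $\pi$ is, locally over $X$, the projection $U\times M^{f\times e}(\K)\times\A^r\to U\times M^{f\times e}(\K)$.

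Compared with the bare citation, your route exhibits the local models explicitly. This also gives the codimension $(e-k+1)(f-k+1)$ of $\Sigma$ in the bundle of homomorphisms, and of $V$ in $X\times W$. That codimension is the extra information invoked in the proof of Proposition~\ref{prop:birgeom}: there $e=d+1$, $f=3$, $k=3$, giving codimension $d-1$.

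Three small points to tighten.

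\textbf{Finite-dimensionality.} You use that $W=H^0(X,E^\vee\otimes F)$ is finite-dimensional; otherwise $X\times W$ is not a variety and $r=\operatorname{rk}K$ is infinite. This holds because $X$ is projective in the paper's setting. In general you can replace $W$ by any finite-dimensional subspace generating $E^\vee\otimes F$.

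\textbf{The flagged obstacle.} The issue you raise about the scheme structure is not really there. The splitting gives an isomorphism of $U$-schemes $\Phi\colon U\times W\to U\times M^{f\times e}(\K)\times\A^r$ with $\mathrm{pr}_{12}\circ\Phi=\pi|_{U\times W}$. Hence $\Phi$ carries the scheme-theoretic preimage $\pi^{-1}(\Sigma|_U)$ isomorphically onto $\Sigma|_U\times\A^r$. No canonicity of the splitting is needed.

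\textbf{Global integrality of $V$.} You check integrality of $V$ only locally. For global irreducibility you have two options:
\begin{itemize}
\item repeat the covering argument you gave for $\Sigma$: the opens $V|_U$ are irreducible and pairwise meet;
\item note that $\pi|_V\colon V\to\Sigma$ is smooth, hence open, and surjective, with irreducible fibres (affine spaces of dimension $r$) over the irreducible $\Sigma$.
\end{itemize}
Your alternative via flat local homomorphisms with Cohen--Macaulay base and fibres also works, applied directly to $V\to\Sigma$.
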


\subsection{Cox Sheaves and Cox rings}
We begin by recalling the definition of a Cox sheaf. Let $X$ be a normal variety such that $\Gamma(X,\mathcal{O}^*) = \K^*$. Denote by ${\rm WDiv}(X)$ the group of Weil divisors of $X$.
The {\em divisor class group} of $X$ is defined as  
\[
 {\rm Cl}(X) :={\rm WDiv}(X)/\sim,
\]  
where $\sim$ is the linear equivalence relation. We can associate with any subgroup $\Gamma \subseteq {\rm WDiv}(X)$ the following {\em sheaf of divisorial algebras}:  
\[
 \mathcal{S} := \bigoplus_{D \in \Gamma} \mathcal{O}_X(D).
\]  
Suppose now that the quotient map $\Gamma \to {\rm Cl}(X)$ is surjective, and let $\Gamma^0$ be its kernel.  
Fix a homomorphism $\chi \colon \Gamma^0 \to \mathbf{K}(X)^*$ such that ${\rm div}(\chi(D)) = D$ for any $D \in \Gamma^0$.  
Let $\mathscr{I} \subseteq \mathcal{S}$ be the ideal sheaf locally generated by elements of the form $1 - \chi(D)$, where $D$ ranges over $\Gamma^0$.  
From now on, assume that ${\rm Cl}(X)$ is finitely generated.  
The {\em Cox sheaf} is the ${\rm Cl}(X)$-graded sheaf of algebras defined as  
\[
 \mathcal{R}_X := \mathcal{S} / \mathscr{I}.
\]  
Choosing a different $\Gamma$ or a different $\chi$ does not change the isomorphism class of the Cox sheaf.  
If $X$ is $\Q$-factorial, then the Cox sheaf is locally of finite type~\cite[\S 1.6.1]{adhl}.  
In this case, we denote its relative spectrum by  
\[
 \widehat{X} := {\rm Spec}_X \mathcal{R}_X.
\]  
It can be shown that $\widehat{X}$ is an irreducible, normal, quasiaffine variety~\cite[Construction 1.6.1.3]{adhl}.

\begin{defn}
The {\em Cox ring} of $X$ is defined as the ring of global sections of the Cox sheaf, i.e.\  
\[
 \mathcal{R}(X) := \Gamma(X, \mathcal{R}_X).
\]  
\end{defn}

\subsection{Notions in birational geometry}

In this subsection, we recall a few definitions from birational geometry.

\begin{defn}
A small birational map between two projective varieties $X,Y$ is a birational map $f\colon X\DashedArrow Y$ which is an isomorphism in codimension 1, that is, there exist open subsets $U\subset X$ and $V \subset Y$ such that $f_{|U}\colon U \to f(U)=V$ is an isomorphism, and $\mathrm{codim}_XU\geq 2$, $\mathrm{codim}_YV\geq 2$. If $X$ and $Y$ are normal and $\Q$-factorial, we say that $f$ is a small $\Q$-factorial modification.
\end{defn}

The notion of a small $\mathbf{Q}$-factorial modification lies at the heart of the definition of Mori dream space.

\begin{defn}
A normal and $\Q$-factorial projective variety $Y$ is called a Mori dream space if the following conditions are satisfied:
\begin{enumerate}
\item $\mathrm{Pic}(Y)$ is finitely generated,
\item  the nef cone $\mathrm{Nef}(Y)$ is generated by finitely many Cartier divisors which are semiample,
\item  there exist finitely many small $\Q$-factorial modifications $f_i\colon Y \DashedArrow Y'_i$ such that $Y'_i$ satisfies the first two items, for each $i$, and
\[\mathrm{Mov}(Y)=\bigcup_{i} f_i^*(\mathrm{Nef}(Y'_i)).\]
\end{enumerate}
\end{defn}

Thanks to the results of Birkar--Cascini--Hacon--McKernan (\cite{BCHM10}), one of the most important classes of Mori dream spaces is that of Fano varieties.

\begin{defn}
A Fano variety is a normal, $\Q$-factorial projective variety with klt singularities, such that the anticanonical divisor is ample.
\end{defn}

The definition of a Mori dream space translates algebraically, namely, a normal and $\Q$-factorial projective variety $X$ is a Mori dream space if and only if the Cox ring $\mathcal{R}(X)$ is a finitely generated algebra over the ground field, by results of Hu--Keel (\cite{HK00}).

\section{The birational geometry of the hypersurfaces}

We start this section by providing a general result on the existence of small birational maps for hypersurfaces $Y$ in $\P^1 \times Z$. In what follows, $p_1$ and $p_2$ denote the projections, $H_1$ denotes the class in $N^1(Y)_{\R}$ of the restriction to $Y$ of any divisor in $|p_1^*\mathscr{O}_{\P^1}(1)|$, and $H_2$ the class of the restriction of any divisor in $|p_2^*L'|$ to $Y$. The construction was inspired by \cite{Ottem15}.

\begin{prop}\label{prop:birgeom}
Let $Z$ be a Cohen--Macaulay projective variety of dimension $m \geq 3$. Set $X:=\P^1\times Z$, and let $L=\mathscr{O}_{\mathbf{P}^1}(d)\boxtimes L'$ be a big and globally generated line bundle, with $2\leq d\leq m$. 
Then the general member $Y$ of $|L|$ is Cohen--Macaulay, and admits a small birational map $Y\DashedArrow Y'$ to another Cohen--Macaulay projective variety $Y'$. Moreover, if $Y$ is normal, then $Y'$ is normal as well.
\end{prop}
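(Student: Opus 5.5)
Following \cite{Ottem15}, I will use the determinantal description of $Y$. Write a section of $L$ as $f=\sum_{i=0}^d (-1)^i f_i\, t_1^{d-i}t_2^{i}$ with $f_i\in H^0(Z,L')$. The Cox-ring presentation in Theorem \ref{thm:p1z-cox} suggests the following: $Y$ is the degeneracy locus of a map of vector bundles on $X$, namely
\[
\varphi\colon \mathscr{O}_X^{\oplus d}\oplus \mathscr{O}_X \longrightarrow \ldots
\]
More concretely, $Y$ is cut out by the vanishing of a $2\times 2$ determinant once we introduce the auxiliary sections $S_i$. So I will work instead with the $2\times d$ matrix description on $Z$. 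Consider the map of bundles on $Z$
\[
\psi\colon \mathscr{O}_Z^{\oplus 2}\longrightarrow (L')^{\oplus d}\otimes\ldots
\]
given by the matrix $M=\begin{pmatrix} f_0 & f_1 & \cdots & f_{d-1}\\ f_1&f_2&\cdots&f_d\end{pmatrix}$. This is the Ottem-type choice adapted to general $f_i$, with a general $2\times d$ matrix of sections of suitable bundles. A point $(t,z)\in\P^1\times Z$ lies on $Y$ if and only if $(t_1,t_2)$, suitably twisted, lies in the kernel of a $2\times d$ matrix $A(z)$ of sections, i.e.\ $(t_1,t_2)A(z)=0$.

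Concretely, I will realise $Y$ as $\{(t,z): t\cdot A(z)=0\}\subset \P^1\times Z$ for a general matrix $A$ of global sections of $\Hom(E,F)$ with $E=\mathscr{O}_Z^{\oplus 2}$ and $F$ a sum of twists of $L'$. This uses that a general degree-$(d,L')$ form can be written this way: the equations $f_0+T_2S_1=\dots=0$ show that the $S_i$ are determined, so a single form corresponds to a rank-$\le 1$ condition. I will check this dimension count carefully, since for $d\ge 2$ a general member of $|L|$ should be the first projection of $\{(t,z)\}$ restricted to where $\mathrm{rk}\le 1$.

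With this description there are two natural incidence varieties. The first is $Y\to Z$, whose image is the locus $D_1(A)$ where $\mathrm{rk}A(z)\le 1$; the fibre over $z$ is $\P^1$ exactly when $A(z)=0$. The second is $Y'\subset \P^{d-1}\times Z$, defined by $A(z)\cdot s^{T}=0$. For a general $A$, Lemma \ref{lem:CMvar} with globally generated $\Hom(E,F)$ shows that the relevant degeneracy and incidence schemes are Cohen--Macaulay varieties of the expected codimension, and hence so are $Y$ and $Y'$. The common model is the closure $W$ of the locus of pairs $(t,s)$ with $tAs^T=0$ of the right type. Both $W\to Y$ and $W\to Y'$ are isomorphisms outside loci where the rank drops: $D_0(A)$ on the $Y$ side and the corresponding rank-$\le d-2$ locus on the $Y'$ side.

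The main step is showing that these exceptional loci have codimension $\ge 2$. The expected codimension of $\{A(z)=0\}$ in $Z$ is $2d$, and the fibres over it are $\P^1$. So the exceptional set in $Y$ has dimension $m-2d+1$, while $\dim Y=m$, giving codimension $\ge 2$ because $d\ge 2$. Symmetrically, on $Y'$ the fibres are $\P^{d-1}$ over the locus where the rank is $\le 0$ in the other direction, which has codimension $2d$. This gives dimension $m-2d+d-1=m-d-1$, hence codimension $d+1\ge2$. The requirement $d\le m$ ensures that $Y'$ dominates an $m$-dimensional space rather than degenerating. Bigness of $L$ ensures the relevant loci are nonempty or of the right dimension. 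Since $Y$ and $Y'$ are isomorphic in codimension one and Cohen--Macaulay, normality transfers via Serre's criterion: $S_2$ holds automatically, and $R_1$ holds because the codimension-one points correspond under the isomorphism. I expect the hardest part to be justifying that a general $Y\in|L|$ really admits this determinantal form with general $A$, so that Lemma \ref{lem:CMvar} applies.
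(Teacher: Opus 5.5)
There is a genuine gap at the first step. Your determinantal model for $Y$ is wrong, and everything after it is built on that model: the two incidence varieties, the codimension counts, and the appeal to Lemma~\ref{lem:CMvar}.

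\textbf{Why the model fails.} The locus $\{(t,z)\colon (t_1,t_2)A(z)=0\}$, for a $2\times d$ matrix $A$ on $Z$, is cut out by $d$ equations linear in $t$. So it has expected codimension $d$ in $\P^1\times Z$, not $1$.
\begin{itemize}
\item For $d=2$ and your Hankel matrix, $t_1f_0+t_2f_1=t_1f_1+t_2f_2=0$ forces $f_0f_2=f_1^2$ at $z$, so the locus has dimension $m-1$.
\item By contrast, $Y\to Z$ is surjective: a general fibre is $d$ points, and the fibres are $\P^1$ exactly over $B:=\{f_0=\dots=f_d=0\}$. So its image is not a rank locus $D_1(A)$.
\item Likewise $\{A(z)s^t=0\}\subset\P^{d-1}\times Z$ has dimension $m+d-3$, which is not $m$ in general.
\end{itemize}

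\textbf{The correct model.} The $S_i$ in Theorem~\ref{thm:p1z-cox} are \emph{new unknowns}, not entries of a matrix on $Z$. A point $(t,z)$ lies on $Y$ if and only if there is $s=(s_1,\dots,s_d)$ with $f_0+t_2s_1=0$, $f_i+t_1s_i+t_2s_{i+1}=0$ for $1\le i\le d-1$, and $f_d+t_1s_d=0$.
\begin{itemize}
\item Read as linear equations in $(s,1)$, this says a certain $(d+1)\times(d+1)$ matrix is singular. Its first $d$ columns are bidiagonal ($t_2$ on the diagonal, $t_1$ just below) and its last column is $(f_0,\dots,f_d)^t$. Its determinant is $\pm f$, so $Y$ is its first degeneracy locus.
\item Read as linear equations in $(t_1,t_2,1)$, they say the $(d+1)\times 3$ matrix $N$ with columns $(0,s_1,\dots,s_d)^t$, $(s_1,\dots,s_d,0)^t$, $(f_0,\dots,f_d)^t$ has rank at most $2$. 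This locus is $Y'\subset\P^{d-1}\times Z$. It has expected codimension $d-1$, hence dimension $m$.
\item The first $d$ columns, respectively the first two, are independent for $t\neq 0$, respectively $s\neq 0$. So the kernels are one-dimensional with nonzero last entry. Hence $Y\dashrightarrow Y'$ and its inverse are defined exactly off the loci lying over $B$.
\end{itemize}

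\textbf{Dimension counts.} These must be redone: the exceptional loci lie over $B$, which has codimension $d+1$ in $Z$, not $2d$.
\begin{itemize}
\item In $Y$ the exceptional locus is $\P^1\times B$, of codimension $d\ge 2$.
\item In $Y'$ it is $\P^{d-1}\times B$, of dimension $m-2$. That is codimension exactly $2$, not $d+1$, so smallness on the $Y'$ side is sharp.
\end{itemize}

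\textbf{The Cohen--Macaulay step.} Lemma~\ref{lem:CMvar} does not give the conclusion ``for a general $A$''. The matrix $N$ is not a general section of the relevant bundle of homomorphisms: its entries are forced into the linear shape above in the $\P^{d-1}$-coordinates. You need a direct argument:
\begin{itemize}
\item Every component of $Y'$ has codimension at most $d-1$, hence dimension at least $m$. This is because $Y'$ is the preimage of the determinantal locus under a section of a vector bundle, which is a regular immersion.
\item Off $\P^{d-1}\times B$, $Y'$ is homeomorphic to an open subset of the irreducible $m$-dimensional $Y$. Meanwhile $\P^{d-1}\times B$ has dimension only $m-2$.
\item Hence $Y'$ is irreducible of the expected codimension, and therefore Cohen--Macaulay.
\item It is reduced because it is generically reduced (for example via a reduced fibre of $Y'\to\P^{d-1}$) and has no embedded points.
\end{itemize}
Once this is in place, your Serre-criterion argument transferring normality from $Y$ to $Y'$ is fine.
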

\begin{proof}
\vspace{0.1cm}

First of all, we observe that the general member of $|L|$ is a Cohen--Macaulay variety because the ambient variety is, and $L$ is an invertible sheaf. Since $L$ is big and globally generated, then $L'$ is, and the zero locus of a general section of $\oplus_{i=0}^d L'$ has pure dimension $m-d-1$. We want to show that $Y$ admits a small birational map $Y\DashedArrow Y'$. Let $T_1, T_2$ be the coordinates of $\P^1$, whose respective pull-back generate the line bundle $p_1^*\mathscr{O}_{\P^1}(1)$. Then we can write
\[
Y=\{T_1^df_0+T_1^{d-1}T_2f_1+\cdots+T_1T_2^{d-1}f_{d-1}+T_2^df_d=0\},
\]
where $\{f_i\}_{i=0}^d$ is a set of general global sections of $L'$. We observe that $Y$ is the degeneracy locus of a morphism of vector bundles on $\P^1 \times Z$. For convenience, we set $$E=\mathscr{O}_X^{d+1}, \; F=p_1^*\mathscr{O}_{\P^1}(1)^{\oplus d} \oplus p_2^*L', \; E'=\mathscr{O}_{X'}^{d+1},\; F'=p_1'^*\mathscr{O}_{\P^{d-1}}(1)^{\oplus 2}\oplus p_2'^*L'.$$  In particular, $Y$ is the first degeneracy locus of the morphism 
\[
M\colon E \to F,
\] 
defined by the matrix
\[
M=\begin{pmatrix}
    T_2   &   0    & \cdots &  0  & f_0 \\
   -T_1   &  T_2   & \ddots & \vdots  & f_1 \\
     0    &  -T_1   & \ddots &   0   &   \vdots      \\
  \vdots  & \vdots & \ddots&  T_2   & f_{d-1} \\
     0    & 0     & \cdots & -T_1  & f_d
\end{pmatrix}.
\]
Now, we observe that, unless $T_1=T_2=0$, the rank of $M$ decreases at most by 1, and so $\mathrm{ker}(M)$ is at most 1-dimensional. Let $U$ be the open subset of $Y$ consisting of points $(t_1,t_2,z)$ for which there exists an $i$ such that $f_i$ does not vanish. For points in $U$ we have that $\mathrm{ker}(M)$ is one-dimensional and generated by a unique element of the form $(t_1',\dots,t_d',1)^t$. We observe that the point $(t_1',\dots,t_d',z)$ satisfies the equations defined by the maximal minors of the matrix
\[
N=\begin{pmatrix}
    0     &    T_1'     &  f_0    \\
   -T_1'   &    T_2'     &  f_1    \\
  \vdots  &   \vdots   & \vdots  \\
-T_{d-1}'  &    T_d'     & f_{d-1} \\
     -T_d' &     0      & f_{d} 
\end{pmatrix}.
\]
Let us denote by $Y'$ the closed subscheme of $X'=\P^{d-1}\times Z$ defined by the vanishing of the maximal minors of $N$. The scheme $Y'$ is the first degeneracy locus of the morphism of vector bundles
\[
N\colon E' \to F'
\]
defined by the matrix $N$. Then we have a well-defined rational map $f\colon Y\dashrightarrow Y'$ of schemes (defined at points in $U$) which, set theoretically, is defined as  
\begin{equation}\label{birationalmap}
f\colon (t_1,t_2,y_0,\dots,y_n) \mapsto (t_1',\dots,t_d',y_0,\dots,y_d). 
\end{equation}
In this way, we obtain a bijective correspondence between the matrices of the form of $M$, and the matrices of the form of $N$. These form in $H^0(X,E^{\vee}\otimes F)$ (resp.\ $H^0(X',{E'}^{\vee}\otimes {F'})$) an affine subspace $\A^l$, where $l=(d+1)\cdot h^0(Z,L')$. 
\medskip

We now show that when $Y$ is sufficiently general, the scheme $Y'$ is a Cohen--Macaulay variety. The indeterminacy locus of $f$ is contained in the set $\{f_0=\dots=f_d=0\}\subset Y$. Since $Y$ is general, we know that the indeterminacy locus of $f$ has codimension at least $d$ in $Y$. We observe that either $Y'$ is irreducible, or has two irreducible components, one of which coincides with the closure of the subset of points for which there exists one $i$ such that $f_i$ does not vanish, and has necessarily dimension $m$. In the second case, the other irreducible component would have dimension $m-2$ (and so codimension $d+1$) and coincides set theoretically with the locus $\{f_0=\dots=f_d=0\}\subset \P^{d-1}\times Z$. But the codimension of $Y'$ in $X':=\P^{d-1}\times Z$ is smaller than or equal to $d-1$. Indeed, following notation of Lemma \ref{lem:CMvar}, the scheme $Y'$ is set-theoretically the intersection of $X'$ and $\Sigma$ in $$V:=V\left[\mathscr{O}_{X'}^{d+1}\otimes \left(p_1'^*\mathscr{O}_{\P^{d-1}}(1)^{\oplus 2}\oplus p_2'^*L'\right)\right],$$ and $X$ is embedded in $V$ via the section $N$. This embedding is a regular immersion since it is a section of a smooth morphism (\cite[Lemma 31.22.8]{SP24}). Then any irreducible component of $\Sigma \cap X'$ has codimension at most $d-1$ in $X'$, by \cite[Lemma 43.13.2]{SP24} (see also \cite[Lemma 2.7]{Ott95}, or  \cite[Theorem 14.4(b)]{Fulton98}). This implies that $Y'$ is irreducible, and hence Cohen--Macaulay, as it has the expected codimension (see \cite[Theorem 14.4(c)]{Fulton98}). In particular, to show that $Y'$ is reduced, we only need to find a closed point $y'$ at which $Y'$ is reduced.
 \medskip
 
We denote by $V$ the incidence correspondence 
\[V=\{(x',\phi)\in X' \times H^0(X,E'^{\vee}\otimes F') \;|\; \text{rk}(\phi_{x'})\leq 2 \}.\]
By Lemma \ref{lem:CMvar} the scheme $V$ is a closed Cohen--Macaulay subscheme of $$X' \times H^0(X,E'^{\vee}\otimes F')$$ of codimension $d-1$.
\medskip

Since $Y$ is general, we may assume that the sections $f_0,\dots,f_{d-2}$ define a reduced subscheme of $Z$, and form a regular sequence. Consider the projection $p_1'\colon Y'\to \P^{d-1}$ and consider the locus $W$ of $\P^{d-1}$ where the fibres of  $p_1'$  have the expected (pure) dimension $m-d+1$. This is an open dense subset of $\P^{d-1}$, by (\cite[Corollary 13.1.5]{EGA4.3}). Since $Y'$ is Cohen--Macaulay, by the miracle flatness Theorem the morphism ${p_1'}_{|{p_1'}^{-1}(W)}\colon {p_1'}^{-1}(W) \to W$ is flat. In particular, the point $t'=(0\colon \dots \colon 0 \colon 1)\in \P^{d-1}$ belongs to $W$, by our choice of the $f_i$. It follows that $p_1'$ is flat at any point of the scheme-theoretic fibre $W_{t'}=\P^{d-1}\times_{Z} \mathrm{Spec}(k(t'))$. But by our choice of $f_0,\dots,f_{d-2}$, the closed subscheme $W_{t'}$ is reduced. This implies that $Y'$ contains a reduced point (see\ \cite[Theorem 23.9]{Matsumura87}), and hence is everywhere reduced as $Y'$ has no embedded points. It follows that the scheme $Y'$ is a Cohen--Macaulay variety.
\medskip

Now observe that if $(t_1',\dots,t_d')\neq 0$, the rank of $N$ decreases at most by 1. If we denote by $U'$ the open subset of $Y'$ consisting of points such that there exists one $i$ for which $f_i$ does not vanish, arguing as above, we obtain a well-defined map $g\colon Y' \DashedArrow Y$, $$g\colon (t_1',\dots,t_d',y_0,\dots,y_n) \mapsto (t_1,t_2,y_0,\dots,y_d),$$ where $(t_0,t_1,1)^t$ is a generator of $\mathrm{ker}(N)$ evaluated at $(t_1',\dots,t_d',y_0,\dots,y_n)$. It is easy to check that $f$ and $g$ are mutually inverse, and hence we conclude that $f$ is a small birational map.
\medskip

Suppose now that $Y$ is normal. Since the map $Y\DashedArrow Y'$ is small, it follows that $Y'$ is regular in codimension one, and hence satisfies Serre's conditions $R_1$. Clearly $Y'$ satisfies condition $S_2$ as well, as it is Cohen--Macaulay. Then Serre's criterion gives that $Y'$ is a normal projective variety.
\end{proof}

\begin{cor}\label{cor}
Let $Z$ be a normal, Cohen--Macaulay and $\Q$-factorial projective variety of Picard rank 1. Let $L$ be an ample and globally generated line bundle on $\P^1\times Z$. Then either the general hypersurface $Y\in |L|$ admits a unique small $\Q$-factorial modification, which is an isomorphism if $d=m$, or a unique divisorial contraction. Moreover 

\begin{enumerate}

\item If $1<d<m$, we have
\[
\mathrm{Eff}(Y)=\mathrm{Mov}(Y)=\R_{\geq 0}H_1+\R_{\geq 0}(H_2-H_1);
\] 
\item If $d=m$, we have
\[
\mathrm{Eff}(Y)=\mathrm{Nef}(Y)=\R_{\geq 0}H_1+\R_{\geq 0}(H_2-H_1);
\] 
\item If $d=1$, we have $$\mathrm{Eff}(Y)=\R_{\geq 0}H_1+\R_{\geq 0}(H_2-H_1)$$ and $$\mathrm{Mov}(Y)=\mathrm{Nef}(Y)=\R_{\geq 0}H_1+\R_{\geq 0}H_2.$$
\end{enumerate}
To conclude, if $\mathrm{Pic}(Z)$ is finitely generated, then $Y$ is a Mori dream space.
\end{cor}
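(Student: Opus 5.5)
Write $L=\mathscr{O}_{\P^1}(d)\boxtimes L'$ and $Y=\{\sum_i T_1^{d-i}T_2^i f_i=0\}$ with $f_i\in H^0(Z,L')$ general. First I would fix the Picard group. Since $L$ is ample and $\dim Y=m\geq 3$, a Grothendieck--Lefschetz type argument should give $\mathrm{rk}\,N^1(Y)=2$. Concretely, I would apply Ravindra--Srinivas for normal varieties, or check directly that restriction $N^1(X)\to N^1(Y)$ is an isomorphism. Hence $N^1(Y)_\R$ is spanned by $H_1,H_2$. Both classes are nef, being pullbacks of globally generated bundles. The projection $p_1|_Y\colon Y\to\P^1$ is a fibration with general fibre a hypersurface in $Z$, so $H_1$ is not big and spans an extremal ray of both $\mathrm{Nef}(Y)$ and $\mathrm{Eff}(Y)$. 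The other ray of $\mathrm{Eff}(Y)$ comes from the divisor class $H_2-H_1$. It is effective: $\sum_i T_1^{d-i}T_2^i f_i=T_1\cdot A+T_2^d f_d$, so on $Y$ we have $T_1 A=-T_2^d f_d$. It follows that $\{f_d=0\}\cap Y$ contains $\{T_2=0\}\cap Y$, and the residual divisor lies in $|H_2-H_1|$, with $d$ appropriately taken into account. To show $H_2-H_1$ spans the boundary of $\mathrm{Eff}(Y)$, I would exhibit a covering family of curves $C$ with $(H_2-H_1)\cdot C=0$. In the small case these are the lines in the fibres of $Y\to\P^{d-1}$. In the divisorial case $d=1$, I would instead use that the residual divisor $E$ is the exceptional divisor of $p_2|_Y$ and satisfies $E\cdot C<0$ on the contracted curves.

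Next I would split into cases according to $d$. For $d=1$, $Y=\{T_1f_0+T_2f_1=0\}$ is the blow-up of $Z$ along $\{f_0=f_1=0\}$, which has codimension 2 because the $f_i$ are general. The map $p_2|_Y$ is therefore birational, with exceptional divisor $E\sim H_2-H_1$. This gives $\mathrm{Nef}=\mathrm{Mov}=\langle H_1,H_2\rangle$, since $H_2$ is semiample and non-ample, and $\mathrm{Eff}=\langle H_1,E\rangle$. For $2\le d\le m$, I would invoke Proposition \ref{prop:birgeom} to get the small map $f\colon Y\DashedArrow Y'\subset\P^{d-1}\times Z$. Its indeterminacy locus $\{f_0=\dots=f_d=0\}\cap Y$ has codimension $\geq 2$. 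Pulling back $\mathscr{O}_{\P^{d-1}}(1)$ through $f$ gives the class $H_2-H_1$: the coordinates $t_i'$ are, up to scaling, ratios of minors of $M$, which have bidegree $(-1)$ in $T$ and $1$ in $L'$.

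When $d<m$, the map $Y'\to\P^{d-1}$ has positive-dimensional fibres, of dimension $m-d+1\ge 2$. Its second projection $Y'\to Z$ is birational and contracts only curves in codimension $\ge 2$. So $\mathrm{Nef}(Y')=\langle f_*H_2,\;f_*(H_2-H_1)\rangle$, and $\mathrm{Mov}(Y)=\mathrm{Nef}(Y)\cup f^*\mathrm{Nef}(Y')=\langle H_1,H_2-H_1\rangle=\mathrm{Eff}(Y)$. When $d=m$, I would show that $f$ is an isomorphism. The base locus $\{f_0=\dots=f_m=0\}$ on $Z$ is empty since $\dim Z=m$, so both $f$ and $g$ are regular. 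Then $H_2-H_1$ is base-point free, which gives $\mathrm{Nef}=\mathrm{Mov}=\mathrm{Eff}=\langle H_1,H_2-H_1\rangle$.

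Uniqueness of the small $\Q$-factorial modification follows because $\mathrm{Mov}$ is the union of exactly these chambers. $\Q$-factoriality of $Y'$ follows from that of $Y$ through the small map, since divisors correspond. Finally, when $\mathrm{Pic}(Z)$ is finitely generated, $\mathrm{Pic}(Y)$ is as well. All the chambers are rational polyhedral and their generators are semiample: they are pullbacks of $\mathscr{O}(1)$ under the morphisms $Y\to\P^1$, $Y\to Z$, $Y'\to\P^{d-1}$ and $Y'\to Z$. So the definition of a Mori dream space is verified directly.

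The main obstacle is showing that the Picard number of $Y$ is 2 without smoothness assumptions. The fallback plan is to use $\Q$-factoriality and Cohen--Macaulayness together with a Lefschetz theorem for class groups of ample divisors in dimension $\geq 4$ on $X$. A secondary difficulty is checking that $f^*\mathscr{O}_{\P^{d-1}}(1)=H_2-H_1$ holds exactly, and not only up to a fixed divisor.
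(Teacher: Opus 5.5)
Your plan is essentially the paper's proof. Both use Picard number two from Ravindra--Srinivas and the small map $f\colon Y\DashedArrow Y'$ of Proposition~\ref{prop:birgeom}, with $H_2-H_1=f^*H_1'$ pulled back from $\P^{d-1}$ and hence not big. Both treat $d=1$ as the blow-up of $Z$ along $\{f_0=f_1=0\}$ and use $Y\cong Y'$ for $d=m$; your reason there, that $m+1$ general sections of $L'$ have no common zero, is the right one. Both finish with semiampleness of the chamber generators. Using covering curves in the fibres of $Y'\to\P^{d-1}$ instead of the invariance of bigness under small maps is a harmless variation, though the paper's version is shorter. Several individual claims are, however, wrong as written.

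\emph{Birationality of $Y'\to Z$.} This fails for $d\geq 2$. The map $f$ commutes with the projections to $Z$, and over a general $z$ the binary form $\sum_i f_i(z)T_1^{d-i}T_2^i$ has $d$ roots. So $Y\to Z$ and $Y'\to Z$ are generically finite of degree $d$. What your computation of $\mathrm{Nef}(Y')$ actually needs is that $H_2'$ is nef but not ample. This holds because over each point of $\{f_0=\dots=f_d=0\}$ the last column of $N$ vanishes, so the fibre is a whole $\P^{d-1}$. That locus is non-empty exactly because $d<m$.

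\emph{Effectivity of $H_2-H_1$.} $\{f_d=0\}\cap Y$ contains $\{T_1=0\}\cap Y$, not $\{T_2=0\}\cap Y=\{T_2=f_0=0\}$. With this correction your ``secondary difficulty'' disappears. The kernel entries $t_i'$ are regular sections of $(\mathscr{O}_{\P^1}(-1)\boxtimes L')|_Y$, characterised by $f_0=-T_2t_1'$, $f_i=T_1t_i'-T_2t_{i+1}'$, $f_d=T_1t_d'$; up to sign they are the $S_i$ of Section~4. Their common zeros lie in $\{f_0=\dots=f_d=0\}\cap Y$, which has codimension $d\geq 2$, so the linear system has no fixed part. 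Moreover $f_d=T_1t_d'$ is precisely the paper's identity $f_*H_1=H_2'-H_1'$.

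\emph{$\Q$-factoriality of $Y'$.} ``Divisors correspond'' does not transfer $\Q$-factoriality through a small map. The target of a non-trivial small contraction is isomorphic to the source in codimension one, yet it is never $\Q$-factorial. Argue instead as follows. By Ravindra--Srinivas and $\mathrm{Cl}(\P^1\times Z)=\mathbf{Z}\oplus\mathrm{Cl}(Z)$, every class on $Y$ has the form $aH_1+p_2^*B|_Y$. Its transform on $Y'$ is $a(H_2'-H_1')+p_2'^*B|_{Y'}$, which is $\Q$-Cartier because $Z$ is $\Q$-factorial. The paper leaves this point implicit.

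\emph{$\mathrm{Mov}=\mathrm{Nef}$ for $d=1$.} ``$H_2$ is semiample and not ample'' does not by itself give this equality. You need that every class strictly between $H_2$ and $E$ is negative on the fibres of $E\to\{f_0=f_1=0\}$. Then $E$ lies in its stable base locus.
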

\begin{proof}
First of all, we observe that since $L$ is ample and globally generated, the Picard number of the general hypersurface $Y$ is $2$, by the Grothendieck--Lefschetz Theorem (\cite[Theorem 1]{RS06}). Consider the small $\Q$-factorial modification $f\colon Y \dashrightarrow Y'$ constructed in Proposition \ref{prop:birgeom}. We compute $f_*(H_1)$ and $f_*(H_2)$. The interesting case is $f_*(H_1)$. To compute it, pick the divisor $D:=\{T_1=0\} \cap Y$, whose image via $f$ is contained in the set of points of $Y'$ satisfying the maximal minors of the matrix
\[
N=\begin{pmatrix}
    0     &    T_1'     &  f_0    \\
   -T_1'   &    T_2'     &  f_1    \\
  \vdots  &   \vdots   & \vdots  \\
-T_{d-1}'  &    T_d'     & f_{d-1} \\
     -T_d' &     0      & 0 
\end{pmatrix}.
\]
Since we started from points of $Y$ having $T_1=0$, it follows that the image of $D$ via $f$ is the zero locus of a section of $[\mathscr{O}_{\P^{d-1}}(-1)\boxtimes L']_{|Y'}$, and so $f_*(H_1)=H_2'-H_1'$. It is straightforward to check that $f_*(H_2)=H_2'$. Note that $H_2-H_1$ is not big, for example, because $f_*(H_2-H_1)=H'_1$ is not big in $Y'$. Then the ray $\R_{\geq 0}(H_2-H_1)$ is extremal in $\mathrm{Eff}(Y)$. Now, the Picard number of $Y$ is $2$, and so $\mathrm{Eff}(Y) =\R_{\geq 0}(H_2-H_1)+\R_{\geq 0}H_1$. 
\medskip

\underline{If $1<d<m$}, by the discussion above, it is clear that $\mathrm{Nef}(Y)=\R_{\geq 0} H_2+ \R_{\geq 0} H_1$, and $\mathrm{Mov}(Y)=\mathrm{Eff}(Y)$.
\medskip

\underline{If $d=m$}, the map $f \colon Y \DashedArrow Y'$ is an isomorphism, and hence the Cartier divisor $H_2-H_1$ is nef, and induces the fibration $Y \to \P^{d-1}$. It is clear then that in this case $\mathrm{Eff}(Y)=\mathrm{Mov}(Y)=\mathrm{Nef}(Y)=\R_{\geq 0} (H_2-H_1)+ \R_{\geq 0} H_1$.
\medskip

\underline{If $d=1$}, the variety $Y$ is the blow-up of $Z$ along the ideal sheaf $\mathscr{I}$ generated by $f_0,f_1$, and the blowing-up morphism is the divisorial contraction $p_2 \colon Y \to Z$. The exceptional divisor is given by the--up to scalar--unique section of $[\mathscr{O}_{\P^1}(-1)\boxtimes L']_{|Y}$. Indeed, an equation for $Y$ is $\{T_1f_0+T_2f_1=0\}$. Then, if $f_0=0$, either $T_2=0$, or $f_1=0$. Taking out $\{T_2=0\}$ (which means taking a section of $[\mathscr{O}_{\P^1}(-1)\boxtimes L']_{|Y}$), gives exactly $\{f_0=f_1=0\}$, i.e.\ the exceptional divisor. Moreover, $\mathrm{Mov}(Y)=\mathrm{Nef}(Y)=\R_{\geq 0} H_2+ \R_{\geq 0} H_1$.
\medskip

Assuming now that $\mathrm{Pic}(Z)$ is finitely generated, we observe that any nef divisor on $Y$ is semiample. Indeed, if $d<m$, we have $\mathrm{Nef}(Y)=\R_{\geq 0} H_2+ \R_{\geq 0} H_1$, and the line bundles $p_2^*L'_{|Y},\; p_1^*\mathscr{O}_{\P^1}(1)_{|Y}$ are semiample. This is enough to conclude that $Y$ is a Mori dream space.
\end{proof}

\begin{proofB}
It is a direct consequence of Proposition \ref{prop:birgeom} and Corollary \ref{cor}.

\end{proofB}

\section{The Cox ring of the hypersurfaces}
In this section we interpret algebraically the results obtained in the previous section. In particular, we compute the Cox rings of the hypersurface we have considered. To this end, we start by recalling the construction in~\cite{hlu}. 
\medskip

Let $Z$ be a normal projective variety of dimension at least $3$. We assume
that $\operatorname{Cl}(Z)$ has rank $1$ and that its Cox ring
$R:=\mathcal R(Z)$ is finitely generated. 
We denote by $\overline Z:=\operatorname{Spec}(R)$ the total coordinate space
of $Z$, and by $\widehat Z\subseteq \overline Z$ its characteristic space.
Since the irrelevant ideal is the radical of the ideal generated by the
homogeneous elements of positive degree, the complement
$\overline Z\setminus \widehat Z$ is the vertex of the affine cone
$\overline Z$. In particular, its codimension in $\overline Z$ is
$\dim Z+1$, hence at least $4$.
The Cox ring of $X:=\mathbf P^1\times Z$ is naturally identified with
$R[T_1,T_2]$, where $T_1$ and $T_2$ are the Cox coordinates of the
$\mathbf P^1$-factor. Its total coordinate space is
$\mathbf A^2\times \overline Z$, whereas its characteristic space is
$(\mathbf A^2\setminus\{0\})\times \widehat Z$. The quotient map fits into
the diagram
\[
\begin{tikzcd}
(\mathbf A^2\setminus\{0\})\times \widehat Z
\arrow[r,hook]
\arrow[d,"p"']
&
\mathbf A^2\times \overline Z
\\
\mathbf P^1\times Z.
\end{tikzcd}
\]
The irrelevant locus in $\mathbf A^2\times \overline Z$ is
$V(T_1,T_2)\cup \bigl(\mathbf A^2\times(\overline Z\setminus \widehat Z)\bigr)$.
The second component has codimension at least $4$ in
$\mathbf A^2\times \overline Z$. Hence the only irreducible component of the
irrelevant locus of codimension $2$ is $V(T_1,T_2)$.
Let now $Y \subseteq \mathbf P^1\times Z$ be a normal hypersurface defined by
the homogeneous equation
\[
f=
f_0T_1^d-f_1T_1^{d-1}T_2+\cdots+(-1)^df_dT_2^d,
\tag{1}
\]
where $d\geq 1$ and $f_0,\dots,f_d\in R$ are homogeneous elements of the same
degree. Set $A:=R[T_1,T_2]/\langle f\rangle$.
Let $Y_0:=Y\cap X_{\mathrm{reg}}$.
Since $Y$ is normal, $Y_0$ is a big open subset of $Y$. Indeed, suppose that
a codimension-one point $y\in Y$ lies in the singular locus of $X$. Since
$Y$ is normal, $\mathcal O_{Y,y}$ is regular. On the other hand, $Y$ is
locally cut out in $X$ by one non-zero divisor. Thus $\mathcal O_{Y,y}$ is a
hypersurface quotient of $\mathcal O_{X,y}$, and the regularity of
$\mathcal O_{Y,y}$ forces $\mathcal O_{X,y}$ to be regular. This contradicts
the assumption that $x$ lies in the singular locus of $X$. Hence
$Y\setminus Y_0$ has codimension at least $2$ in $Y$.
Define $\widehat Y$ as the Zariski closure of $p^{-1}(Y_0)$ inside
$(\mathbf A^2\setminus\{0\})\times\widehat Z$, and define $\widetilde Y$ as
the Zariski closure of $p^{-1}(Y_0)$ inside $\mathbf A^2\times\overline Z$.
Then $\widetilde Y=V(f)$. Moreover, since the only codimension-two component
of the irrelevant locus is $V(T_1,T_2)$, the open subset obtained from
$\widetilde Y$ by removing the codimension-one components of
$\widetilde Y\setminus\widehat Y$ is
$\widetilde Y_1=V(f)\setminus V(T_1,T_2)$.
We shall use the following algebraic description of
$A_{T_1}\cap A_{T_2}$ inside $A_{T_1T_2}$.
\medskip

\begin{prop}\label{prop:intersection}
Let \(R\) be a noetherian integral domain, and let
\(f_0,\dots,f_d\in R\). Set
\[
f=f_0T_1^d-f_1T_1^{d-1}T_2+\cdots+(-1)^df_dT_2^d
\in R[T_1,T_2],
\]
and let \(A:=R[T_1,T_2]/\langle f\rangle\).
Let \(B:=R[T_1,T_2,S_1,\dots,S_d]\), and let
\(I:=\langle g_0,\dots,g_d\rangle\), where
\[
g_0=f_0+T_2S_1,\qquad
g_i=f_i+T_1S_i+T_2S_{i+1},\qquad
g_d=f_d+T_1S_d.
\]
Here \(1\leq i\leq d-1\) in the middle formula. Assume that
\(f_0,\dots,f_d\) is a regular sequence in \(R\). Then the natural map
\(B/I\to A_{T_1T_2}\) identifies \(B/I\) with
\(A_{T_1}\cap A_{T_2}\) inside \(A_{T_1T_2}\).
\end{prop}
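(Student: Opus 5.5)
The plan is to build the map explicitly, check that it lands in $A_{T_1}\cap A_{T_2}$, and then prove it is an isomorphism by exploiting the regular sequence. First I will verify that $f\in I$ and define $\varphi\colon B/I\to A_{T_1T_2}$. A direct telescoping computation will give
\[
f=\sum_{i=0}^d(-1)^i T_1^{d-i}T_2^{i}g_i ,
\]
since in this sum the terms involving the $S_j$ cancel in pairs. So $A=R[T_1,T_2]/\langle f\rangle$ maps to $B/I$.

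In $A_{T_1T_2}$ the relations $g_i=0$ are solved recursively. From $g_0=0$ we get $S_1=-f_0/T_2$, and from the middle relations $S_{i+1}=-(f_i+T_1S_i)/T_2$. This yields the closed form
\[
S_{i}=-\sum_{j<i}(-1)^{i-1-j}f_jT_1^{i-1-j}T_2^{\,j-i}\in A_{T_2}.
\]
Solving from the other end, using $g_d=0$ and then descending, gives analogous expressions lying in $A_{T_1}$. The two expressions agree in $A_{T_1T_2}$ precisely because $f=0$. Hence each $S_i$ has an image in $A_{T_1}\cap A_{T_2}$, and $\varphi$ is well defined with image contained in the intersection.

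Next I will show that $\varphi$ lands onto the intersection. Let $h\in A_{T_1}\cap A_{T_2}$ and write $h=a/T_1^k=b/T_2^l$. Reducing each degree with respect to $T_1,T_2$ by means of the $S_i$ should express $h$ as a polynomial in $R[T_1,T_2,S]$. To organize this I will use the $\mathbf Z^2$-grading with $T_1,T_2$ of bidegree $(1,0),(0,1)$ and $S_i$ of bidegree $(i-1,-i)$, adjusted to the index pattern. Equivalently, I want $(B/I)_{T_1}=A_{T_1}$ and $(B/I)_{T_2}=A_{T_2}$, which follows from the recursion: after inverting $T_2$, the $S_i$ are eliminated and the last relation becomes $f/T_2^{d}$. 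The remaining content of surjectivity is then the claim that $B/I=(B/I)_{T_1}\cap(B/I)_{T_2}$.

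That claim, together with injectivity of $\varphi$, reduces to two properties of $B/I$:
\begin{itemize}
\item $T_1$ is a nonzerodivisor on $B/I$;
\item $T_2$ is a nonzerodivisor on $B/I/T_1$.
\end{itemize}
The second makes $T_1,T_2$ a regular sequence, which gives $(B/I)_{T_1}\cap(B/I)_{T_2}=B/I$ by the standard depth-two argument.

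To prove these, set $T_1=0$. Then $I$ becomes $\langle f_0+T_2S_1,\,f_i+T_2S_{i+1},\,f_d\rangle$, and modding out by $T_2$ as well gives
\[
R[S]/\langle f_0,\dots,f_d\rangle .
\]
Now $B$ has dimension $\dim R+d+2$ and $I$ has $d+1$ generators. Since $f_0,\dots,f_d$ is a regular sequence in $R$, the sequence $g_0,\dots,g_d,T_1,T_2$ reduces modulo $(T_1,T_2)$ to a regular sequence. Working graded, or locally at the relevant primes, this implies that $g_0,\dots,g_d,T_1,T_2$ is itself a regular sequence in $B$, via permutability of homogeneous regular sequences with all elements of positive degree in the standard $T,S$-grading. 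The catch is that $f_i$ has degree $0$ in that grading, so I will instead use the grading in which $R$ has positive degree, or argue by local regular sequences at primes containing $(T_1,T_2)$. In that setting $I$ is generated by a regular sequence, and $T_1,T_2$ is regular on $B/I$ at all primes in the support, which is what is needed.

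For injectivity, a regular $T_1T_2$ means $B/I\hookrightarrow (B/I)_{T_1T_2}=A_{T_1T_2}$.

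The main obstacle is establishing the regular sequence property globally without Cohen--Macaulayness of $R$. I expect to handle it by a positive grading under which $B$ is graded with $B_0=\mathbf K$ and all of $g_i,T_1,T_2$ are homogeneous of positive degree, so that any permutation of a regular sequence stays regular. Assigning the $R$-degree through the class group, with $\deg S_i=\deg f_i-\deg T_1$, is the natural choice, and I must check that it is positive, or else twist it by a multiple of the $T$-degree.
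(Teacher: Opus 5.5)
Your proposal is correct and is essentially the paper's proof. It uses the same map to $A_{T_1T_2}$, the same identifications $(B/I)_{T_i}\cong A_{T_i}$ by recursive elimination, and the same depth-two/\v{C}ech argument: $T_1,T_2,g_0,\dots,g_d$ is regular in $B$ because the $g_i$ reduce to the $f_i$ modulo $\langle T_1,T_2\rangle$, and this sequence is then permuted in $B_{\mathfrak p}$ for every prime $\mathfrak p\supseteq I+\langle T_1,T_2\rangle$.

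The ``main obstacle'' you flag at the end is not one. Depth along $\langle T_1,T_2\rangle$ is computed locally, so your local option already gives $H^0_{\langle T_1,T_2\rangle}(B/I)=H^1_{\langle T_1,T_2\rangle}(B/I)=0$, hence the exact \v{C}ech sequence. Your positive-grading option is in any case unavailable here, because $R$ is an arbitrary noetherian domain.

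Similarly, get injectivity from $H^0=0$ together with $A_{T_i}\hookrightarrow A_{T_1T_2}$. A global regular sequence $T_1,T_2$ on its own would not make $T_1T_2$ a nonzerodivisor.
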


\begin{proof}
Set \(C:=B/I\). We first describe the map to \(A_{T_1T_2}\).
It is induced by the identity on \(R[T_1,T_2]\) and by the recursive
assignments
\[
S_1=-\frac{f_0}{T_2},\qquad
S_{i+1}=-\frac{f_i+T_1S_i}{T_2}
\quad (1\leq i\leq d-1).
\]
The relations \(g_0,\dots,g_d\) are satisfied in \(A_{T_1T_2}\), so this gives
a well-defined homomorphism \(C\to A_{T_1T_2}\).
After localizing at \(T_2\), the relations \(g_0,\dots,g_{d-1}\) eliminate
\(S_1,\dots,S_d\) recursively. With these substitutions, the last relation
\(g_d\) becomes \((-1)^df/T_2^d\). Hence \(C_{T_2}\cong A_{T_2}\). Similarly,
after localizing at \(T_1\), one eliminates the variables in the opposite
direction, starting from \(S_d=-f_d/T_1\), and the remaining relation becomes
\(f/T_1^d\). Hence \(C_{T_1}\cong A_{T_1}\). These identifications are
compatible after localizing at \(T_1T_2\), and therefore
\(C_{T_1T_2}\cong A_{T_1T_2}\).
It remains to prove that \(C=C_{T_1}\cap C_{T_2}\) inside \(C_{T_1T_2}\).
We show that \(C\) has depth at least \(2\) along
\(\langle T_1,T_2\rangle C\).
Since \(T_1,T_2\) is a regular sequence in \(B\), it is enough to look modulo
\(\langle T_1,T_2\rangle\). In that quotient, the images of
\(g_0,\dots,g_d\) are exactly \(f_0,\dots,f_d\) in
\(R[S_1,\dots,S_d]\). Since polynomial extension is flat,
\(f_0,\dots,f_d\) remains a regular sequence in \(R[S_1,\dots,S_d]\). Hence
\(T_1,T_2,g_0,\dots,g_d\) is a regular sequence in \(B\).
Let \(\mathfrak p\) be a prime ideal of \(B\) containing
\(I+\langle T_1,T_2\rangle\). Localizing at \(\mathfrak p\), the sequence
\(T_1,T_2,g_0,\dots,g_d\) remains regular. Since regular sequences are
permutable in local rings, \(g_0,\dots,g_d,T_1,T_2\) is also a regular sequence
in \(B_{\mathfrak p}\). Therefore the images of \(T_1,T_2\) form a regular
sequence on \(C_{\mathfrak p}\). This proves that
\[
\operatorname{depth}_{\langle T_1,T_2\rangle C}(C)\geq 2.
\]
Consequently the local cohomology groups
\(H^0_{\langle T_1,T_2\rangle C}(C)\) and
\(H^1_{\langle T_1,T_2\rangle C}(C)\) vanish. Equivalently, the
\(\check{\mathrm C}\)ech sequence associated with the cover
\(D(T_1)\cup D(T_2)\) is exact:
\[
0\longrightarrow C
\longrightarrow C_{T_1}\oplus C_{T_2}
\longrightarrow C_{T_1T_2}.
\]
Thus \(C=C_{T_1}\cap C_{T_2}\) inside \(C_{T_1T_2}\). Using the
identifications \(C_{T_1}\cong A_{T_1}\), \(C_{T_2}\cong A_{T_2}\), and
\(C_{T_1T_2}\cong A_{T_1T_2}\), we obtain
\[
C\cong A_{T_1}\cap A_{T_2}
\]
inside \(A_{T_1T_2}\), as claimed.
\end{proof}

We can now prove Theorem \ref{thm:p1z-cox}.

\begin{proofA}
By assumption, the open subset
$\widetilde Y_1=V(f)\setminus V(T_1,T_2)$ is normal. Moreover, the pullback on
divisor class groups is an isomorphism. Hence~\cite[Thm. 1]{hlu} applied
to the embedding $Y\subseteq \mathbf P^1\times Z$ gives
\[
\mathcal R(Y)=A_{T_1}\cap A_{T_2}
\quad\text{inside } A_{T_1T_2}.
\tag{6}
\]
Here we use that the only codimension-two component of the irrelevant locus
of $\mathbf P^1\times Z$ is $V(T_1,T_2)$.
By Proposition~\ref{prop:intersection}, the regularity of the sequence
$f_0,\dots,f_d$ identifies the intersection in \textup{(6)} with the quotient
displayed in \textup{(5)}. This proves the theorem.
\end{proofA}

\begin{proof}[Proof of Corollary~\ref{cor:general-p1z-cox}]
Since \(Y\) is a member of the complete linear system associated with the
invertible sheaf \(L\), it is a Cartier divisor on
\(X=\mathbf P^1\times Z\). Thus the first hypothesis of
Theorem~\ref{thm:p1z-cox} is satisfied.
We now prove the normality of the open subset
\[
\widetilde Y_1:=V(f)\setminus V(T_1,T_2)
\subseteq \mathbf A^2\times \overline Z.
\]
Set \(W:=(\mathbf A^2\times \overline Z)\setminus V(T_1,T_2)\).
Since \(\overline Z\) is Cohen--Macaulay, the open subset \(W\) is
Cohen--Macaulay. The scheme \(\widetilde Y_1\) is cut out in \(W\) by the
single equation \(f=0\). Moreover, \(W\) is integral and the restriction of
\(f\) to \(W\) is nonzero; hence \(f\) is a non-zero divisor in every local
ring of \(W\). Therefore \(\widetilde Y_1\) is an effective Cartier divisor on
\(W\). In particular, \(\widetilde Y_1\) is Cohen--Macaulay, and hence it
satisfies Serre's condition \(S_2\).\\

Let \(Y_0:=Y\cap X_{\mathrm{reg}}\), and let \(\widehat Y\) be the Zariski
closure of \(p^{-1}(Y_0)\) in the characteristic space
\((\mathbf A^2\setminus\{0\})\times \widehat Z\). By
\cite[Prop.~1.2]{hlu}, for a general choice of \(Y\), the variety
\(\widehat Y\) is normal. Moreover, by construction of \(\widetilde Y_1\),
the complement \(\widetilde Y_1\setminus \widehat Y\) has codimension at least
\(2\) in \(\widetilde Y_1\). Thus every codimension-one point of
\(\widetilde Y_1\) lies on \(\widehat Y\). Since \(\widehat Y\) is normal, it
is regular in codimension one, and consequently \(\widetilde Y_1\) satisfies
Serre's condition \(R_1\). Hence \(\widetilde Y_1\) is normal by Serre's
criterion. This proves the second hypothesis of Theorem~\ref{thm:p1z-cox}.\\

Next, since \(L\) is ample, base-point free, and
\(\dim X=\dim Z+1\geq 4\), the Grothendieck--Lefschetz theorem for class
groups of Ravindra--Srinivas applies to the general member \(Y\in|L|\).
Therefore, the restriction map
\[
\iota^*\colon \operatorname{Cl}(X)\xrightarrow{\sim}\operatorname{Cl}(Y)
\]
is an isomorphism. This gives the third hypothesis of
Theorem~\ref{thm:p1z-cox}.
Finally, the fourth hypothesis of Theorem~\ref{thm:p1z-cox} is exactly the
assumption that \(f_0,\dots,f_d\) is a regular sequence in \(R\). Hence all
hypotheses of Theorem~\ref{thm:p1z-cox} are satisfied, and the stated
presentation of \(\mathcal R(Y)\) follows.
\end{proof}

\begin{proof}[Proof of Corollary~\ref{cor:Fano}]
By \cite[Theorem 1.2]{Brown13}, \cite[Theorem 1.1]{GOST15} and \cite[Remark 2.5]{KO15}, the ring $\mathcal{R}(Z)$ is Cohen--Macaulay, hence the result follows by Corollary \ref{cor:general-p1z-cox}.
\end{proof}

We conclude this note with an explicit example.

\begin{ex}\label{ex:cy-p1-complexity-one}
Let \(Z\) be the Fano threefold No.~1 in~\cite[Thm. 1.1]{bhhn}. Thus
\(\operatorname{Cl}(Z)\cong \mathbf Z\), all Cox variables have degree \(1\),
and
\[
\mathcal R(Z)=
\frac{\mathbf K[T_1,\dots,T_5]}
{\langle T_1T_2+T_3T_4+T_5^2\rangle}.
\]
This variety is Gorenstein, and its anticanonical class is \(-K_Z=3H\), where
\(H\) denotes the positive generator of \(\operatorname{Cl}(Z)\).
Consider \(X:=\mathbf P^1\times Z\). We denote by \(T_6,T_7\) the Cox
coordinates of the \(\mathbf P^1\)-factor. Then
\(\mathcal R(X)=\mathcal R(Z)[T_6,T_7]\). If \(H_1\) denotes the pullback of
\(\mathcal O_{\mathbf P^1}(1)\) and \(H_2\) the pullback of \(H\), then
\(-K_X=2H_1+3H_2\).
Let \(Y\subseteq X\) be a general anticanonical hypersurface. Its equation has
the form
\[
f=f_0T_6^2-f_1T_6T_7+f_2T_7^2,
\]
where \(f_0,f_1,f_2\in \mathcal R(Z)_3\) are general homogeneous elements of
degree \(3\). By adjunction, \(K_Y\sim 0\), so \(Y\) is a Calabi--Yau
threefold, provided it is normal.
Since \(\mathcal R(Z)\) is a Cohen--Macaulay domain and the elements
\(f_0,f_1,f_2\) are general of positive degree, they form a regular sequence.
Thus Theorem~\ref{thm:p1z-cox} applies and gives
\[
\mathcal R(Y)\cong
\frac{
\mathcal R(Z)[T_6,T_7,S_1,S_2]
}{
\langle
f_0+T_7S_1,\,
f_1+T_6S_1+T_7S_2,\,
f_2+T_6S_2
\rangle}.
\]
Equivalently, replacing \(\mathcal R(Z)\) by its explicit presentation, this is
\[
\mathcal R(Y)\cong
\frac{
\mathbf K[T_1,\dots,T_7,S_1,S_2]
}{
\langle
T_1T_2+T_3T_4+T_5^2,\,
f_0+T_7S_1,\,
f_1+T_6S_1+T_7S_2,\,
f_2+T_6S_2
\rangle}.
\]
The grading is induced by the isomorphism
\(\operatorname{Cl}(X)\cong \operatorname{Cl}(Y)\). In particular, if
\(\deg(T_6)=\deg(T_7)=(1,0)\) and \(\deg(T_j)=(0,1)\) for \(1\leq j\leq 5\),
then \(\deg(S_1)=\deg(S_2)=(-1,3)\).
\end{ex}

\bibliographystyle{plain}
\bibliography{ref}

@book {adhl,
    AUTHOR = {Arzhantsev, Ivan and Derenthal, Ulrich and Hausen, J\"{u}rgen
              and Laface, Antonio},
     TITLE = {Cox rings},
    SERIES = {Cambridge Studies in Advanced Mathematics},
    VOLUME = {144},
 PUBLISHER = {Cambridge University Press, Cambridge},
      YEAR = {2015},
     PAGES = {viii+530},
      ISBN = {978-1-107-02462-5},
   MRCLASS = {14Cxx (14Jxx 14Lxx)},
  MRNUMBER = {3307753},
MRREVIEWER = {Alexandr\ V.\ Pukhlikov},
}

@misc{hlu,
      title={The {C}ox ring of an embedded variety}, 
      author={Cristóbal Herrera and Antonio Laface and Luca Ugaglia},
      year={2024},
      eprint={2411.17370},
      archivePrefix={arXiv},
      primaryClass={math.AG},
      url={https://arxiv.org/abs/2411.17370}, 
}

@article {Ottem15,
    AUTHOR = {Ottem, John C.},
     TITLE = {Birational geometry of hypersurfaces in products of projective
              spaces},
   JOURNAL = {Math. Z.},
    VOLUME = {280},
      YEAR = {2015},
    NUMBER = {1-2},
     PAGES = {135--148}
       }

@misc{PIS25,
      title={On the {C}ox rings of some hypersurfaces}, 
      author={Andrew Pollock and Atsushi Ito and Balazs Szendroi},
      year={2025},
      eprint={2504.07591},
      archivePrefix={arXiv},
      primaryClass={math.AG},
      url={https://arxiv.org/abs/2504.07591}, 
}

@article{EGA4.3,
     author = {Grothendieck, Alexander},
     title = {El\'ements de g\'eom\'etrie alg\'ebrique : {IV.} {Etude} locale des sch\'emas et des morphismes de sch\'emas, {Troisi\`eme} partie},
     journal = {Inst. Hautes Études Sci. Publ. Math.},
     pages = {5--255},
     publisher = {Inst. Hautes Études Sci.
Presses Univ. France},
     volume = {28},
     year = {1966},
     url = {http://www.numdam.org/item/PMIHES_1966__28__5_0/}
}

@article {BCHM10,
    AUTHOR = {Birkar, Caucher and Cascini, Paolo and Hacon, Christopher D.
              and McKernan, James},
     TITLE = {Existence of minimal models for varieties of log general type},
   JOURNAL = {J. Amer. Math. Soc.},
    VOLUME = {23},
      YEAR = {2010},
    NUMBER = {2},
     PAGES = {405--468}
}

@book{Ott95,
  title={Variet{\`a} proiettive di codimensione piccola},
  author={Ottaviani, Giorgio},
  series={Ist. nazion. di alta matematica F. Severi},
  url={https://books.google.fr/books?id=HmHqngEACAAJ},
  year={1995},
  publisher={Aracne}
}

@book{ACGH13,
  title={Geometry of Algebraic Curves: Volume I},
  author={Arbarello, Enrico and Cornalba, Maurizio and Griffiths, Phillip and Harris, Joseph D.},
  series={Grundlehren der mathematischen Wissenschaften},
  year={2013},
  publisher={Springer New York}
}

@article {KO15,
    AUTHOR = {Kawamata, Yujiro and Okawa, Shinnosuke},
     TITLE = {Mori dream spaces of {C}alabi-{Y}au type and log canonicity of
              {C}ox rings},
   JOURNAL = {J. Reine Angew. Math.},
  FJOURNAL = {Journal f\"ur die Reine und Angewandte Mathematik. [Crelle's
              Journal]},
    VOLUME = {701},
      YEAR = {2015},
     PAGES = {195--203}
}

@article {GOST15,
    AUTHOR = {Gongyo, Yoshinori and Okawa, Shinnosuke and Sannai, Akiyoshi
              and Takagi, Shunsuke},
     TITLE = {Characterization of varieties of {F}ano type via singularities
              of {C}ox rings},
   JOURNAL = {J. Algebraic Geom.},
    VOLUME = {24},
      YEAR = {2015},
    NUMBER = {1},
     PAGES = {159--182}
}

@article {Brown13,
    AUTHOR = {Brown, Morgan},
     TITLE = {Singularities of {C}ox rings of {F}ano varieties},
   JOURNAL = {J. Math. Pures Appl. (9)},
    VOLUME = {99},
      YEAR = {2013},
    NUMBER = {6},
     PAGES = {655--667}
}

@article {Ito14,
    AUTHOR = {Ito, Atsushi},
     TITLE = {Examples of {M}ori dream spaces with {P}icard number two},
   JOURNAL = {Manuscripta Math.},
    VOLUME = {145},
      YEAR = {2014},
    NUMBER = {3-4},
     PAGES = {243--254}
}

@incollection{HK00,
    AUTHOR = {Hu, Yi and Keel, Sean},
     TITLE = {Mori dream spaces and {GIT}},
      NOTE = {Dedicated to William Fulton on the occasion of his 60th
              birthday},
   JOURNAL = {Michigan Math. J.},
  FJOURNAL = {Michigan Mathematical Journal},
    VOLUME = {48},
      YEAR = {2000},
     PAGES = {331--348}
}

@misc{SP24,
  author       = {The {Stacks project authors}},
  title        = {The Stacks project},
  howpublished = {\url{https://stacks.math.columbia.edu}},
  year         = {2024},
}

@book{Fulton98,
  title={Intersection theory},
  author={Fulton, William},
  year={1998},
  edition={2nd},
  publisher={Springer Science \& Business Media},
  address={Berlin, New York},
  isbn={978-0-387-98549-7}
}

@article {RS06,
    AUTHOR = {Ravindra, Girivaru V. and Srinivas, Vasudevan},
     TITLE = {The {G}rothendieck-{L}efschetz theorem for normal projective
              varieties},
   JOURNAL = {J. Algebraic Geom.},
    VOLUME = {15},
      YEAR = {2006},
    NUMBER = {3},
     PAGES = {563--590}
}

@book{Matsumura87, 
place={Cambridge}, 
series={Cambridge Studies in Advanced Mathematics}, 
title={Commutative Ring Theory}, 
publisher={Cambridge University Press}, 
author={Matsumura, Hideyuki}, 
editor={Reid, Miles}, 
year={1987}, 
collection={Cambridge Studies in Advanced Mathematics}
}

@article{Denisi24,
      title={Birational geometry of hypersurfaces in products of weighted projective spaces}, 
      author={Francesco Antonio Denisi},
      journal = {Math. Z.},
      year={2026},
      volume = {313},
      number = {2},
      doi={10.1007/s00209-026-04023-6}
}

@article{bhhn,
    author = {Bechtold, Benjamin and Hausen, Jürgen and Huggenberger, Elaine and Nicolussi, Michele},
    title = {On Terminal Fano 3-Folds with 2-Torus Action},
    journal = {Int. Math. Res. Not. IMRN},
    volume = {2016},
    number = {5},
    pages = {1563-1602},
    year = {2016},
    month = {01},
    issn = {1073-7928},
    doi = {10.1093/imrn/rnv190},
    url = {https://doi.org/10.1093/imrn/rnv190},
    eprint = {https://academic.oup.com/imrn/article-pdf/2016/5/1563/7374860/rnv190.pdf},
}

\end{document}